\newtheorem{assumption}{Assumption}
\newenvironment{breakablealgorithm}
{
	\refstepcounter{algorithm}
	\hrule height.8pt depth0pt \kern2pt
	\renewcommand{\caption}[2][\relax]{
		{\raggedright\textbf{\ALG@name~\thealgorithm} ##2\par}%
		\ifx\relax##1\relax 
		\addcontentsline{loa}{algorithm}{\protect\numberline{\thealgorithm}##2}%
		\else 
		\addcontentsline{loa}{algorithm}{\protect\numberline{\thealgorithm}##1}%
		\fi
		\kern2pt\hrule\kern2pt
	}
}{
\kern2pt\hrule\relax
}
\begin{document}
\begin{spacing}{1.5}

\title{A Regularized Limited Memory Subspace Minimization Conjugate Gradient Method for Unconstrained Optimization }
\author{ Wumei Sun$^1$ \and Hongwei Liu$^1$  \and  Zexian Liu$^2$}



\institute{
	 Wumei Sun \at
	\email{sunwumei1992@126.com}
	\and	
	Hongwei Liu  \Envelope \at
	\email{hwliuxidian@163.com}
	\and
	Zexian Liu \at
	\email{liuzexian2008@163.com}
	\and
        $^1$  School of Mathematics and Statistics, Xidian University, Xi'an 710126,  China \\
        $^2$  School of Mathematics and Statistics, Guizhou University, Guiyang 550025, China \\
           }

\date{Received: date / Accepted: date}
\maketitle

\begin{abstract}
In this paper, based on the limited memory techniques and subspace minimization conjugate gradient (SMCG)  methods, a regularized limited memory subspace minimization conjugate gradient method is proposed, which contains two types of iterations. In SMCG iteration, we obtain the search direction by minimizing the approximate quadratic model or approximate regularization model. In RQN iteration, combined with regularization technique and BFGS method, a modified regularized quasi-Newton method is used in the subspace to improve the orthogonality. Moreover, some simple acceleration criteria and an improved tactic for selecting the initial stepsize  to enhance the efficiency of the algorithm are designed. Additionally, an generalized nonmonotone line search is utilized and the global convergence of our proposed algorithm is established under mild conditions.  Finally, numerical results show that, the proposed algorithm has a significant improvement over ASMCG\_PR  and is superior to the particularly well-known limited memory conjugate gradient software packages CG\_DESCENT (6.8) and CGOPT(2.0) for the CUTEr library.

\keywords{ Limited memory \and Subspace minimization conjugate gradient method  \and Orthogonality \and Regularization model \and Quasi-Newton method }
\subclass{ 49M37 \and 65K05  \and 90C30 }
\end{abstract}

\section{Introduction}\label{sec1}
Consider problem
	\begin{equation}\label{(1.1)}
		\mathop {\min }\limits_{x \in {\mathbb{R}^n}} f(x),
	\end{equation}
where $f:{\mathbb{R}^n} \to \mathbb{R}$ is a continuously differentiable nonlinear function.

Throughout the article, we use the following notations.  $s_{k-1} = x_k - x_{k-1}$,  $f_k = f(x_k)$, $g_k = g(x_k)$, $y_{k-1} = g_k -g_{k-1} $, $\|\cdot\|$ represents the Euclidean norm and $\lambda_{\max}$ denotes the maximum eigenvalue. Moreover, $\mathrm{dist}\{x,\mathcal{S}\}= \mathrm{inf}\{\|y-x\|, y\in \mathcal{S}\},$ where $x\in \mathbb{R}^n$ and $\mathcal{S}\in \mathbb{R}^n.$

Nonlinear conjugate gradient(CG) method is a well-known method for solving the problem \eqref{(1.1)}, which main iteration is
\begin{equation}\label{(1.2)}
	{x_{k + 1}} = {x_k} + {\alpha _k}{d_k}, \;\; k=0,1,2,\cdots,
\end{equation}
where $x_k$ is the $k$th iteration point, $\alpha _k  > 0$ is the stepsize and $d_k$ is the search direction obtained by
\begin{equation}\label{(1.3)}
d_{0}=-g_{0},\; d_{k}=-g_{k} + {\beta _k}{d_{k-1}}, \; k\geq 1,
\end{equation}
where ${g_{k}}$ is the gradient of $f({x_{k}})$ and ${\beta _k}$ is the conjugate parameter.

It is shown in theory that the convergence and numerical performance variation of different CG methods depend on the selection of conjugate parameters. Some very classical choices of the conjugate parameter $\beta_k$ are Fletcher-Reeves(FR) \cite{Fletcher64}, Polak-Ribi\`{e}re-Polyak(PRP) \cite{Polak69a,Polyak69b}, Dai-Yuan(DY) \cite{Dai99}  and Hestenes-Stiefel(HS) \cite{Hestenes52}, and are given by
\begin{align*}
\beta _k^{FR} = \frac{{{{\left\| {{g_{k + 1}}} \right\|}^2}}}{{{{\left\| {{g_k}} \right\|}^2}}},~~\beta _k^{PRP} = \frac{{g_{k + 1}^T{y_k}}}{{{{\left\| {{g_k}} \right\|}^2}}},~~\beta _k^{DY} = \frac{{{{\left\| {{g_{k + 1}}} \right\|}^2}}}{{d_k^T{y_k}}},~~\beta _k^{HS} = \frac{{g_{k + 1}^T{y_k}}}{{d_k^T{y_k}}}.
\end{align*}
CG algorithms have evolved considerably, and some well-known CG  packages such as CG\_DESCENT \cite{Hager05,Hager06b}  and CGOPT \cite{Dai13} have been proposed in recent years. Other recent related studies on nonlinear CG algorithms can be found in \cite{Dai11,Hager06a}.	

The subspace minimization conjugate gradient (SMCG) algorithm, as a generalization of the CG algorithm, has received much attention from scholars \cite{Andrei14,Yang17}, which can be traced back to the work of Yuan and Stoer \cite{Yuan95}. The search direction of SMCG method is obtained by minimizing the following problem:
\begin{equation}\label{(1.4)}
\min\limits_{d \in \Omega_{k}} \ \   g_{k }^Td + \frac{1}{2}{d^T}{{B}_{k}}d,
\end{equation}
where $\Omega_{k}$ is a subspace spanned by the vectors $g_{k}$ and $s_{k-1},$ i.e., $\Omega_{k}=Span\{g_{k},s_{k-1}\} ,$ and ${{B}_{k}} \in \mathbb{R}^{n \times n}$ is an approximation of Hessian matrix, which is  positive definite and symmetric. Then the search direction $d$  is given by
\begin{equation}\label{(1.5)}
	{d} = u {g_{k}} + v {s_{k-1}},
\end{equation}
where $u$ and $v$ are both real parameters. Substituting \eqref{(1.5)} to \eqref{(1.4)} and combined with the standard secant equation $ {B}_{k}s_{k-1}=y_{k-1},$  formula \eqref{(1.4)} is reorganized as follows:
\begin{equation}\label{(1.6)}
\min\limits_{u,v \in \mathbb{R} } {\left( \begin{array}{c}
                                     \| g_{k} \|^{2} \\
                                     g_{k}^{T}s_{k-1}
                                   \end{array}
\right )} ^{T} {\left( \begin{array}{c}
                         u \\
                         v
                       \end{array}\right )} + \frac{1}{2}{\left( \begin{array}{c}
                         u \\
                         v
                       \end{array}\right )}^{T} {\left(\begin{array}{cc}
                                                         \rho_{k} &  g_{k}^{T}y_{k-1} \\
                                                         g_{k}^{T}y_{k-1} & s_{k-1}y_{k-1}
                                                       \end{array}\right )}{\left( \begin{array}{c}
                         u \\
                         v
                       \end{array}\right ).}
\end{equation}
where $\rho_{k} \approx g^{T}_{k}{B}_{k}g_{k}$.

On the basis of the Barzilai-Borwein(BB) method \cite{Barzilai88}, Dai and Kou \cite{Dai16} proposed an effective BBCG3 method for strictly convex quadratic minimization problem. Afterwards, based on BBCG3 method, Liu and Liu \cite{Liu19} proposed SMCG\_BB method for solving general unconstrained optimization problems.
Motivated by SMCG\_BB method, some efficient SMCG methods \cite{Li18,Li19,Wang19,Zhao21} were later proposed, among which the method based on the regularization model presented by Zhao et al. \cite{Zhao21} is the best in the numerical performance.

The nonlinear CG method is very effective for unconstrained optimization problems. However, the convergence of the algorithm can be very slow for some ill-posed problems and even for quadratic problems with very small dimensions, which may be due to the loss of orthogonality \cite{Hager2013}. Hager and Zhang \cite{Hager2013} pointed out theoretically that the generated successive gradients either in the CG method or the L-BFGS method for the quadratic test problem should be orthogonal. Yet, Hager and Zhang \cite{Hager2013} observed that, when solving the quadratic strictly convex  minimization problem PALMER1C in the CUTEr library \cite{Gould03},  the CG method loses orthogonality due to the rounding errors, while L-BFGS method preserves the orthogonality. In view of this, they developed the limited memory CG method (CG\_DESCENT(6.8)) to correct the possible loss of orthogonality in ill conditioned optimization problems. For the test problems in the CUTEr library \cite{Gould03}, their performance results indicated that CG\_DESCENT(6.8) has an significant improvement over their previously proposed package CG\_DESCENT(5.3).

Although CG\_DESCENT(6.8) \cite{Hager2013} is an efficient method for unconstrained optimization, it still suffers from the following shortcomings: \\
(i) In the numerical implementation, the AWolfe line search \cite{Hager06b} utilized in the algorithm CG\_DESCENT(6.8) does not guarantee global convergence. \\
(ii) CG\_DESCENT(6.8) contains the following three pre-conditioners, corresponding to three different iterations:
\begin{equation}\label{(1.7)}
P_k = I,\; P_k = Z_k \hat{B}^{-1}_{k+1}Z^T_k,\; P_k = Z_k \hat{B}^{-1}_{k+1}Z^T_k +\sigma_k \bar{Z}_k \bar{Z}^T_k,
\end{equation}
where $\sigma_k$ is determined by (4.2) of \cite{Hager2013}, $\hat{B}_{k+1}$, $Z_k$ and $\bar{Z}_k$ are given by the matrices in literature \cite{Hager2013}. These three pre-conditioners make the algorithm CG\_DESCENT(6.8) look complex.\\
(iii) In the convergence analysis, the algorithm CG\_DESCENT(6.8) needs to impose the following assumptions on the pre-conditioners:
\begin{equation}\label{(1.8)}
\|P_k\| \leq \gamma_0,\; g^T_{k+1}P_k g_{k+1} \geq \gamma_1 \|g_{k+1}\|^2, \;d^T_k P^{-1}_k d_k \geq \gamma_2 \|d_{k}\|^2,
\end{equation}
where $\gamma_0 >0,$ $\gamma_1 >0$ and $\gamma_2 >0$. These assumptions are comparatively strict and difficult to be verified in actual practice.

To address the above-mentioned shortcomings, Liu et al. \cite{Liu20} presented an improved Dai¨CKou CG algorithm called CGOPT(2.0), which combines limited memory technology and Dai-Kou CG method.  In CGOPT(2.0) \cite{Liu20}, they utilized a modified quasi-Newton method to restore the lost orthogonality, and established the  convergence of CGOPT(2.0) with fewer assumptions.  Some numerical experiments indicated that CGOPT(2.0)  is better than the famous  CG software package CG\_DESCENT(6.8) \cite{Hager2013}.

In view of the above discussion, a regularized limited memory subspace minimization conjugate gradient method on the basis of SMCG method and limited memory technique is studied in this paper. To recover orthogonality, we propose a modified regularized quasi-Newton method. The major contributions of this paper are the following.
\begin{enumerate}
  \item A regularized limited memory subspace minimization conjugate gradient algorithm is proposed, which combines limited memory technology and SMCG method.
  \item Based on the idea of regularization and  BFGS method, an improved regularized  quasi-Newton method is exploited to improve orthogonality.
  \item Some simple acceleration criteria and an improved initial stepsize selection strategy are designed to enhance the efficiency of the algorithm. Additionally, an generalized nonmonotone line search condition is presented, which may be regarded as an extension of the Zhang-Hager's \cite{Zhang04} nonmonotone line search.
  \item The convergence of the method is built under mild conditions and the corresponding numerical performance shows that the new method is much more effective than the existing methods.
\end{enumerate}

The structure of the paper is as follows.  In Section \ref{sec2}, we describe the detail of the regularized limited memory subspace minimization conjugate gradient algorithm, including the direction selection of SMCG iteration and regularized Quasi-Newton iteration and an effective acceleration technique. Moreover, the decision of the initial step size and the generalized nonmonotone Wolfe line search are also given in this section. In Section \ref{sec3},  some important properties of the search direction are analyzed and the global convergence of the proposed algorithm is established. Numerical experiments for algorithm comparison are showed in Section \ref{sec4}. Conclusions are given in the last section.

\section{A Regularized Limited Memory Subspace Minimization Conjugate Gradient Algorithm}\label{sec2}
In the section, combining the idea of subspace minimization and regularization  quasi-Newton method, we present a regularized limited memory subspace minimization conjugate gradient algorithm.
Firstly, we give the choices of search direction under different iterations. Subsequently, we develop a very effective acceleration technique, a modified initial step selection strategy and generalized nonmonotonic line search technology to optimize the performance of the proposed algorithm. Finally, the details of algorithm RL\_SMCG are described.

\subsection{Direction Selection of SMCG Iteration and Regularized Quasi-Newton Iteration}
The regularized limited memory subspace minimization conjugate gradient method mainly contains two kinds of iterations which are SMCG iteration and regularized quasi-Newton(RQN) iteration, respectively. Furthermore, the search direction derivation of the two iterations is also different.
\subsubsection{SMCG iteration }
The search direction selection of SMCG iteration is closely related to the properties of the objective function $f(x)$ at the iteration point $x_k$. By reference \cite{Dai02,Yuan91}, defined
\begin{equation}\label{(2.1)}
{t_k} = \left| {{{2\left( {{f_{k - 1}} - {f_k} + g_k^T{s_{k - 1}}} \right)}} \left/ \right.{\left({s_{k - 1}^T{y_{k - 1}}}\right)} - 1} \right|,
\end{equation}
to describe  how $f(x)$ approaches a quadratic function on a line segment between $x_{k-1}$ and $x_{k}$.
Literature \cite{Liu18} indicates that if the condition
\begin{equation}\label{(2.2)}
{t_k} \le {\bar{\xi}_4}\;\;{\rm{or}}\;\;\left( {{t_k} \le \bar{\xi}_5\;\;{\rm{and}}\;\;{t_{k - 1}} \le \bar{\xi}_5} \right),
\end{equation}
is satisfied, where $\bar{\xi}_4$ and $\bar{\xi}_5$ are the smaller positive constants and $\bar{\xi}_4<\bar{\xi}_5$, $f(x)$ may be near to a quadratic function on a line between ${x_{k - 1}}$ and ${x_k}$. Moreover, According to \cite{Sun21}, we know that  if the following condition
\begin{equation}\label{(2.3)}
{\bar{\xi}_1} \le \frac{{s_{k - 1}^T{y_{k - 1}}}}{{{{\left\| {{s_{k - 1}}} \right\|}^2}}} \le \frac{{{{\left\| {{y_{k - 1}}} \right\|}^2}}}{{s_{k - 1}^T{y_{k - 1}}}} \le {\bar{\xi}_2},
\end{equation}
is satisfied, then the condition number of the Hessian matrix of the normal function may be not very large, here $\bar{\xi}_1$ and $\bar{\xi}_2$ are positive constants.

Similar to \cite{Zhao21}, based on some certain properties of the function $f(x)$ at the current point $x_{k}$, we derive different search direction by dividing it into the following four cases.

\textbf{(i)} If the condition \eqref{(2.3)} is satisfied while the condition \eqref{(2.2)} are not, this implies that the quadratic model may not be able to approach the objective function $f(x)$ well at the present iteration point $x_{k}$. Then, search direction $d_k$ will be obtained by minimizing the following cubic regular subproblem, i.e.
\begin{equation}\label{(2.4)}
\mathop {\min }\limits_{{d_k} \in {\Omega _k}} {m_k}\left( {{d_k}} \right) = d_k^T{g_k} + \frac{1}{2}d_k^T{{B}_k}{d_k} + \frac{1}{3}{\sigma _k}\left\| {{d_k}} \right\|_{{{B}_k}}^3,
\end{equation}
where $\Omega_{k}$ is a subspace spanned by the vectors $g_{k}$ and $s_{k-1},$  ${{B}_{k}} \in \mathbb{R}^{n \times n}$ is an approximation of Hessian matrix, which is  positive definite and symmetric and satisfying the secant condition ${{B}_k}{s_{k - 1}} = {y_{k - 1}},$ ${\sigma _k}\geq 0$ is an adaptive regularization parameter obtained from interpolation condition and $d_{k}$ is determined by
\begin{equation}\label{(2.5)}
{d}_k = u_k {g_{k}} + v_k {s_{k-1}},
\end{equation}
where $v_k$ and  $u_k$ are parameters to be established. Obviously, we could obtain \eqref{(2.4)} by giving \eqref{(1.4)} a weighted regularization term $\frac{1}{3}{\sigma _k}\left\| {{d_k}} \right\|_{{{B}_k}}^3$. Substituting \eqref{(2.5)} to \eqref{(2.4)}, it is easy to obtain that \eqref{(2.4)} is equivalent to
\begin{equation}\label{(2.6)}
\min\limits_{u_k,v_k \in R } {\left( \begin{array}{c}
                                     \| g_{k} \|^{2} \\
                                     g_{k}^{T}s_{k-1}
                                   \end{array}
\right )} ^{T} {\left( \begin{array}{c}
                         u_k \\
                         v_k
                       \end{array}\right )} + \frac{1}{2}{\left( \begin{array}{c}
                         u_k \\
                         v_k
                       \end{array}\right )}^{T} {\bar{B}_k}
                                                       {\left( \begin{array}{c}
                         u_k \\
                         v_k
                       \end{array}\right)}  + \frac{\sigma _k}{3}\left\| {\left( \begin{array}{c}
                         u_k \\
                         v_k
                       \end{array}\right)}  \right\|^{3}_{\bar{B}_k}.
\end{equation}
where $\bar{B}_k =\left(\begin{array}{cc}
                                                         \rho_{k} &  g_{k}^{T}y_{k-1} \\
                                                         g_{k}^{T}y_{k-1} & s_{k-1}y_{k-1}
                                                       \end{array}\right )$ is a positive definite and symmetric  matrix,
$\rho_{k}$ is an estimate of $g^{T}_{k}{B}_{k}g_{k}$. Similar to BBCG3 \cite{Dai16}, we also use $ { \frac{3}{2}}\frac{{{{\left\| {{y_{k-1}}} \right\|}^2}}}
{{s_{k-1}^T{y_{k-1}}}}I$ to estimate $B_k$ in the term $\rho _{k} $, which means $\rho _{k} = { \frac{3}{2}}\frac{{{{\left\| {{y_{k-1}}} \right\|}^2}}}
{{s_{k-1}^T{y_{k-1}}}}{\left\| {{g_{k}}} \right\|^2}$. Then, by solving problem \eqref{(2.6)} we obtain the following solutions about $u_k$ and $v_k$:
\begin{equation}\label{(2.7)}
\left(
  \begin{array}{c}
    {u}_k \\
    {v}_k \\
  \end{array}
\right)
= \left(
    \begin{array}{c}
      \frac{1}{{\left( {1 + {\sigma _k}{{\left( {{\varpi^*}} \right)}}} \right){\Delta _k}}}\left( {g_k^T{y_{k - 1}}g_k^T{s_{k - 1}} - s_{k - 1}^T{y_{k - 1}}{{\left\| {{g_k}} \right\|}^2}} \right) \\
      \frac{1}{{\left( {1 + {\sigma _k}{{\left( {{\varpi^*}} \right)}}} \right){\Delta _k}}}\left( {g_k^T{y_{k - 1}}{{\left\| {{g_k}} \right\|}^2} - {\rho _k}g_k^T{s_{k - 1}}} \right)\\
    \end{array}
  \right),
\end{equation}
among them,
\begin{equation}\label{(2.8)}
\Delta _k = {\left|\begin{array}{cc}
                                                         \rho_{k} &  g_{k}^{T}y_{k-1} \\
                                                         g_{k}^{T}y_{k-1} & s_{k-1}y_{k-1}
                                                       \end{array}\right |} = \rho_{k}s_{k-1}y_{k-1} -(g_{k}^{T}y_{k-1})^2 >0,
\end{equation}
$\sigma _k$ and $\varpi^*$ are the same as those in literature \cite{Zhao21}, which will not be repeated here.

\textbf{(ii)}  If both  conditions \eqref{(2.3)} and \eqref{(2.2)} hold, this indicates that the objective function $f(x)$ may approach the quadratic model at the current iteration point $x_k.$  Since that is the case, let $\sigma_k =0$, i.e. we consider deriving the search direction by solving the minimization problem \eqref{(1.6)}. Like \textbf{(i)}, we choose $\rho _{k} = { \frac{3}{2} }\frac{{{{\left\| {{y_{k-1}}} \right\|}^2}}}
{{s_{k-1}^T{y_{k-1}}}}{\left\| {{g_{k}}} \right\|^2}$ and $\Delta_{k}$ is determined by \eqref{(2.8)},
then we obtain the following unique solution of quadratic approximate problem \eqref{(1.6)}:
\begin{equation}\label{(2.9)}
\left(
  \begin{array}{c}
    \bar{u}_k \\
    \bar{v}_k \\
  \end{array}
\right)
= \left(
    \begin{array}{c}
      \frac{1}{\Delta_{k}} ( g^{T}_{k}y_{k-1} g^{T}_{k}s_{k-1} - s^{T}_{k-1}y_{k-1}\|g_{k}\|^{2} ) \\
      \frac{1}{\Delta_{k}} ( g^{T}_{k}y_{k-1}\|g_{k}\|^{2}-\rho_{k}g^{T}_{k}s_{k-1}  )\\
    \end{array}
  \right),
\end{equation}
here the search direction is calculated by $d_k = \bar{u}_{k} {g_{k}} + \bar{v}_{k} {s_{k-1}}$, where $\bar{u}_{k}$ and $\bar{v}_{k}$ are determined by \eqref{(2.9)}.

\textbf{(iii)} If  condition \eqref{(2.3)} is not satisfied  and the conditions
\begin{equation}\label{(2.10)}
{{\left| {g_k^T{y_{k - 1}}g_k^T{s_{k - 1}}} \right|}} \le {\bar{\xi}_3}{{s_{k - 1}^T{y_{k - 1}}{{\left\| {{g_k}} \right\|}^2}}}  {\rm{  \;\;and\;\;  }}  {{s_{k - 1}^T{y_{k - 1}}}} \geq {\bar{\xi}_1}  {{{{\left\| {{s_{k - 1}}} \right\|}^2}}},
\end{equation}
are satisfied, where $ 0 \leq \bar{\xi}_3 \leq 1 $, the condition number of the Hessian matrix may be lager, hence the search direction obtained in cases \textbf{(i)} and \textbf{(ii)} may not be better. However, the condition \eqref{(2.10)} can ensure sufficient descent and linear growth in HS conjugate gradient method. Moreover, because of the finite termination nature of the HS conjugate gradient method for solving exact convex quadratic minimization problems, this choice of direction allows for faster convergence of the algorithm. Then, in this case, the search direction is determined by \eqref{(1.3)} and $\beta_k = \beta^{HS}_k$.

\textbf{(iv)} If neither condition \eqref{(2.3)} nor \eqref{(2.10)} holds,  then we pick the following direction, i.e. :
\begin{equation}\label{(2.11)}
{d_k} = -{g_k}.
\end{equation}

In summary, the search direction in the SMCG iteration can be described as in the following:
\begin{equation}\label{(2.12)}
{d_k} = \left\{ \begin{array}{l}
 {u}_k {g_k} +  {v}_k {s_{k - 1}},\;\;\;\;\;\;\;\text{if}\;\eqref{(2.3)}\;\text{holds and}\; \eqref{(2.2)}\;\text{does not hold},\\
 \bar{u}_k {g_k} + \bar{v}_k {s_{k - 1}},\;\;\;\;\;\;\;\text{if}\;\eqref{(2.3)}\;\text{holds and}\; \eqref{(2.2)}\;\text{holds},\\
- {g_k} + \beta _k^{HS}{d_{k - 1}},\;\;\;\;\;\text{if}\;\eqref{(2.3)}\;\text{does not hold and}\;\eqref{(2.10)}\;\text{holds},\\
\;\;\;\;\;\;\;- {g_k},\;\;\;\;\;\;\;\;\;\;\;\;\;\;\;\;\;\;\text{if neither}\;\eqref{(2.3)}\;\text{nor}\;\eqref{(2.10)}\;\text{holds},
\end{array} \right.
\end{equation}
where $ {u}_{k}$ and $ {v}_{k}$ are determined by  \eqref{(2.7)};
$\bar{u}_{k}$ and $\bar{v}_{k}$ are determined by \eqref{(2.9)}.

If the successive gradients have orthogonality or the lost orthogonality is restored, the algorithm performs SMCG iteration. On the contrary, if the orthogonality is lost, the iteration will turn to the following regularized quasi-Newton iteration to improve the orthogonality.

\subsubsection{Regularized Quasi-Newton(RQN) iteration }
When the successive gradients lose their orthogonality, the iteration switches from SMCG iteration to RQN iteration. In other words, a modified regularized BFGS algorithm in subspace $\mathcal{S}_k$ is proposed to restore the orthogonality, where $\mathcal{S}_k$  is a subspace generated by the following limited memory $m$ search directions
\[
\mathcal{S}_k = span \left\{ d_{k-1}, d_{k-2},\cdots, d_{k-m} \right\},
\]
where $m>0$ and $m$ is the number of limited memory. In this article, the limited memory $m$ selected in our algorithm does not exceed 11. Then, as soon as orthogonality is corrected, the RQN iteration is terminated and the SMCG iteration is triggered immediately.

First, we introduce some preparations for turning to RQN iteration.
Let ${S}_k \in \mathbb{R} ^{n\times m}$  be  a matrix  which has columns consisting of $d_{k-1}, d_{k-2},\cdots, d_{k-m}$. In similar fashion to limited memory CG method \cite{Hager2013}, we also assume that columns of ${S}_k$ are line-independent.  Let the QR factorization of ${S}_k$ be ${S}_k = Z_k \bar{R}_k$, where the columns of $ {Z}_k \in \mathbb{R} ^{n\times m}$ form the normal orthogonal bases for  subspace $\mathcal{S}_k$ and $ {\bar{R}}_k \in \mathbb{R} ^{m\times m}$ is the upper triangular matrix with positive diagonal terms.

If $g_k$ is included almost in subspace $\mathcal{S}_k$, then we think that the orthogonality property of the algorithm may be lost.  In this case, we interrupt the SMCG iteration and  move to minimize the objective function in the subspace $\mathcal{S}_k$:
\begin{equation}\label{(2.13)}
\mathop {\min }\limits_{z \in {\mathcal{S}_k}} f(x_k +z).
\end{equation}
The solution to the subspace problem \eqref{(2.13)} will  improve the orthogonality and guide us to a suitable search direction that will lead us out of the subspace $\mathcal{S}_k$. Similar to \cite{Hager2013}, we utilize the distance from $g_k$ to subspace $\mathcal{S}_k$ to judge whether orthogonality is lost. If the condition
\begin{equation}\label{(2.14)}
\mathrm{dist} \left\{ g_k,\mathcal{S}_k \right\} \leq \tilde{\eta}_0 \|g_k\|
\end{equation}
is satisfied, where $0<\tilde{\eta}_0<1$ and $\tilde{\eta}_0$ is small, we think $g_k$ is almost contained in $\mathcal{S}_k$, it means that the orthogonality of the successive gradients has lost. Then, we switch to RQN iteration to solve the subspace problem \eqref{(2.13)} until the gradient is nearly orthogonal enough to the subspace to meet the condition
\begin{equation}\label{(2.15)}
\mathrm{dist} \left\{ g_k,\mathcal{S}_k \right\} \geq \tilde{\eta}_1 \|g_k\|,
\end{equation}
where $0 < \tilde{\eta}_0 < \tilde{\eta}_1 < 1$. At this time, the algorithm iteration will go away subspace $\mathcal{S}_k$ and turn to the SMCG iteration.  Because the column of $Z_k$ is the orthonormal basis of $\mathcal{S}_k$, it's not hard to know from the
definition of $\mathrm{dist} \left\{ g_k,\mathcal{S}_k \right\}$ that \eqref{(2.14)} and \eqref{(2.15)} can be expressed as
\begin{equation}\label{(2.16)}
\left( 1-\tilde{\eta}^2_0 \right)\|g_k\|^2  \leq  \left\| Z^T_k g_k\right\|^2,
\end{equation}
and
\begin{equation}\label{(2.17)}
\left( 1-\tilde{\eta}^2_1 \right)\|g_k\|^2  \geq  \left\| Z^T_k g_k\right\|^2.
\end{equation}
In \cite{Hager2013}, Hager and Zhang utilized the limited memory BFGS (L-BFGS) \cite{Liu1989,Nocedal1980} method to solve the subspace problem \eqref{(2.13)} for restoring the orthogonality, and achieved better numerical results. However, it should be noted that the convergence analysis of the limited memory CG method \cite{Hager2013} requires imposing strict assumptions \eqref{(1.8)} on the preprocessors \eqref{(1.7)}.
Because the dimension $m$ of the chosen subspace $\mathcal{S}_k$ is usually small and when orthogonality is lost, the properties of the function at the iteration point maybe not very good. Based on these, we consider a regularized L-BFGS method in the subspace $\mathcal{S}_k$ for solving the subproblem \eqref{(2.13)}.

The search direction of general quasi-Newton method \cite{Yuan99} for  unconstrained optimization \eqref{(1.1)} is the form of $d_k = -B^{-1}_k g_k$, where $B_k$ is a positive definite and symmetric  approximation to the Hessian matrix. As one of the most popular methods of quasi-Newton method, L-BFGS method stores the approximate Hessian matrix of the objective function using small memory and computes the search direction $d_k$ using the nearest $m$ vector pairs of $\left( s_{k-i}, y_{k-i}\right)$, $i=0, 1, \ldots, m-1.$

Ueda and Yamashita \cite{Ueda2010} presented a regularized Newton method for nonconvex unconstrained optimization, whose search direction $d_k$  is obtained by  solving the following linear equations:
\begin{equation}\label{(2.18)}
\left( \nabla^2 f(x_k)+ \mu I \right)d_k=-\nabla f(x_k),
\end{equation}
where $\mu >0$ is referred to as the regularized parameter. The regularized Newton method \cite{Ueda2010} generally defaults to a step size of 1, and global convergence is guaranteed by controlling the parameter $\mu_k$. However, as a type of Newton method, the regularized Newton method in \cite{Ueda2010} must solve the Hessian matrix of $f$ which is particularly computationally complex.  To address this drawback, some scholars proposed the regularized limited memory BFGS-type method \cite{Tarzangh2015,Liu2014}  for solving unconstrained optimization problems, i.e. the search direction $d_k$ is the solution of the following equations
\begin{equation}\label{(2.19)}
\left( B_k + \mu I \right)d_k=-\nabla f(x_k),
\end{equation}
where matrix $B_k$ is an approximate Hessian determined by a particular quasi-Newton method. Regularization technology can effectively improve the efficiency of quasi-Newton method in solving ill-conditioned problems. Nevertheless, when computing $B_k$ by the L-BFGS method, it is very hard to calculate $\left(B_k + \mu I \right)^{-1}$. Hence, motivated by \cite{Tankaria2022}, we present a regularized quasi-Newton method which combines the BFGS method with the regularized technique to improve orthogonality in the m-dimensional subspace $\mathcal{S}_k$. In this paper, we consider $ B_k + \mu I $ as an approximation of $ \nabla^2 f(x_k)+ \mu I $. Because the matrix $B_k$ is the approximate Hessian of $f(x_k)$ and  $ B_k + \mu I $  can be used as an approximate Hessian of $f(x_k)+\frac{\mu}{2}\|x\|^2$.  At this point, we utilize $\left(s_k,y_k(\mu)\right)$ instead of $(s_k,y_k)$, where
$$ y_k(\mu)=\left( \nabla f(x_{k+1})+ \mu x_{k+1} \right) - \left( \nabla f(x_{k})+ \mu x_{k} \right)  = y_k + \mu s_k.$$
Note that the regularized BFGS method stores as many vector pairs as the traditional BFGS method and hence it does not require additional memory.

In \cite{Li01}, a effective BFGS quasi-Newton method for solving nonconvex unconstrained minimization was proposed by Li and Fukushima  \cite{Li01}, in which the matrix $B_{k+1}$ is updated by
 \[
 B_{k+1}=\left\{ \begin{array}{l}
B_k - \frac{B_k s_k s^T_k B_k}{s^T_k B_ks_k} + \frac{y_k y^T_k}{s^T_k y_k},{\rm{\;\;\;if \; }} \frac{s^T_k y_k}{\|s_k\|^2}>\upsilon \|g_k\|^\alpha,\\
B_k,     {\rm{  \quad  \quad\quad \quad \quad\quad \quad \quad\;\; \;\;\;\; otherwise \; }},
\end{array} \right.
 \]
where $\upsilon >0$ and $\alpha>0$. Some recent advances about modified BFGS method can be found in \cite{Li1999,Gu2003,Tankaria2022}.

Inspired by the quasi-Newton methods described above, we propose an improved regularized BFGS method to solve the subproblem \eqref{(2.13)} in subspace $\mathcal{S}_k$.

\textbf{\emph{Remark 1}.}  In what follows, the variables with hats belong to subspace $\mathcal{S}_k$ , distinguished from the ones found in the full space $\mathbb{R}^n$.

Let $\hat{x}= \left(\hat{x}_1, \hat{x}_2, \cdots, \hat{x}_m, \right)^T \in \mathbb{R}^m$. The subproblem \eqref{(2.13)} can be expressed as
\begin{equation}\label{(2.20)}
\mathop {\min }\limits_{\hat{x} \in \mathbb{R}^m} f(x_k + \hat{x}_1 d_{k-1}+ \hat{x}_2 d_{k-2} +\cdots + \hat{x}_m d_{k-m} ).
\end{equation}
Similar to \cite{Liu20}, because the regularized quasi-Newton directions in the subspace $\mathcal{S}_k$  always transform to the full space $\mathbb{R}^n$ and QR decomposition of matrix $S_k$, we can obtain $d_k=Z_k \hat{d}_k,$ $\hat{g}_k=Z^T_k g_k,$ $\hat{y}_k=Z^T_k y_k,$ $\hat{s}^T_k \hat{y}_k = s^T_k y_k,$  $\|\hat{s}_k\|^2 = \|s_k\|^2$ and $\hat{f}_k=f_k$.

Let $B_k(\mu)=B_k + \mu I $, then inspired by Li and Fukushima  \cite{Li01}, we develop an improved regularized BFGS method to solve the above subproblem \eqref{(2.20)} with a search direction of the form
\begin{equation}\label{(2.21)}
\hat{d}_{k+1} = -\hat{B}^{-1}_{k+1}(\mu) \hat{g}_{k+1},
\end{equation}
where $\hat{B}_{k+1}(\mu)$ is given by
\begin{equation}\label{(2.22)}
 \hat{B}_{k+1}(\mu)=\left\{ \begin{array}{l}
\hat{B}_{k}(\mu) - \frac{\hat{B}_{k}(\mu) \hat{s}_k \hat{s}^T_k \hat{B}_{k}(\mu)}{\hat{s}^T_k \hat{B}_{k}(\mu)\hat{s}_k} + \frac{\hat{y}_k(\mu) \hat{y}^T_k(\mu)}{\hat{s}^T_k \hat{y}_k(\mu)},{\rm{\;\;\;if \; mod }} {(k, l)\neq 0} \; {\rm{and}}\;   \frac{\hat{s}^T_k \hat{y}_k(\mu)}{\hat{s}^T_k\hat{s}_k }\geq\upsilon,\\
\hat{I},     {\rm{ \quad \quad \quad \quad \quad \quad  \quad \quad  \quad\quad \quad \quad\quad \quad \quad \; \;\;\;\; otherwise \; }},
\end{array} \right.
\end{equation}
where  $\upsilon >0$,  $\mathrm{mod} (k, l)\neq 0$ represents the remainder for $k$ modulo $l$,  $\hat{y}_k(\mu) = \hat{y}_k + \mu \hat{s}_k$ and $\mu>0 $ is an important regularized parameter. The condition $\mathrm{mod} (k, l)\neq 0$  means the matrix $\hat{B}_{k}(\mu)$ will be reset to the identity matrix $\hat{I} $ after updating $l$ times, which ensures the good convergence of the algorithm. In the paper, we set $l=\max (m^2, 20)$.  Obviously, $\hat{s}^T_k \hat{y}_k(\mu) >0$, and  as soon as the matrix  $\hat{B}_{k}(\mu)$ is symmetric and positive definitive, it is not hard to prove that the matrix $\hat{B}_{k+1}(\mu)$ is symmetric and positive definitive.

As a very important regularization parameter, $\mu$ is closely related to the convergence analysis of the regularized BFGS method. In this paper, the idea of the trust-region radius is used to find the suitable search direction by controlling $\mu$, in other words,
The ratio of objective function value reduction to model function value reduction is utilized. Then, give the definition of a ratio function $r_k(\hat{d}_k, \mu)$ as follows
\begin{equation}\label{(2.23)}
r_k(\hat{d}_k, \mu) = \frac{\hat{f}(x_k)-\hat{f}(x_k + \alpha_k \hat{d}_k)}{\hat{f}(x_k) - \hat{q}_k(\hat{d}_k, \mu)},
\end{equation}
where $\hat{q}_k : \mathbb{R}^m \times \mathbb{R} \rightarrow \mathbb{R}$ is a function of the form
\begin{equation}\label{(2.24)}
\hat{q}_k(\hat{d}_k, \mu) = \hat{f}(x_k) + \alpha_k \hat{g}^T_k \hat{d}_k + \frac{1}{2}\alpha_k ^2 \hat{d}^T_k \hat{B}_k (\mu) \hat{d}_k.
\end{equation}
Then, if the ratio function $r_k(\hat{d}_k, \mu)$ is relatively large, this means that compared with the reduction of the model function, the reduction of the objective function is large enough, we choose to reduce  the parameter $\mu$. On the flip side, if the ratio function $r_k(\hat{d}_k, \mu)$ is relatively small, i.e., $\hat{f}(x_k)-\hat{f}(x_k + \alpha_k \hat{d}_k)$ is small, we will increase $\mu$. In addition, to ensure that the algorithms converge well, we limit $\mu$ to an interval, i.e. $0<\mu_{\min}<\mu<\mu_{\max}$. In general, if the next iteration point is closer to the current iteration point, the reduction of the function value may not be obvious. At this time, we hope to get a new iteration point by modifying the search direction, then the search direction improved by regular parameter $\mu$ may be a good choice.
Therefore, if $\|\hat{s}_k\|^2 \leq \hat{\tau} $ $(\hat{\tau}>0)$,  our choice and update of $\mu$ are as follows:
\begin{equation}\label{(2.25)}
\mu_{k+1}= \left\{\begin{array}{l}
                    \max \left\{\mu_{\min}, \sigma_1 \mu_k\right\}, \;\;\;\;\mathrm{if}\;r_k(\hat{d}_k, \mu)\geq \sigma_3,\\
                    \min \left\{\mu_{\max}, \sigma_2 \mu_k\right\}, \;\;\;\; \mathrm{otherwise},
                  \end{array}
 \right.
\end{equation}
where $0<\sigma_1\leq1$, $\sigma_2>1$ and $0<\sigma_3\leq1$.
Otherwise, we choose $\mu = 0$, i.e., the regularized BFGS method is reduced to a general BFGS method.

\textbf{\emph{Remark 2}.} In order to simplify the symbol and facilitate writing, we still record the updated symbol $\mu_{k+1}$ as $\mu$.

In the process of algorithm implementation, the search direction \eqref{(2.21)} in  subspace $\mathcal{S}_k$  always converts to the full space $\mathbb{R}^n$ at each RQN iteration, i.e.,
\begin{equation}\label{(2.26)}
{d}_{k+1} = -P_k {g}_{k+1},
\end{equation}
where
\begin{equation}\label{(2.27)}
P_k =  Z_k \hat{B}^{-1}_{k+1}(\mu)Z^T_k
\end{equation}
and $\hat{B}_{k+1}(\mu)$ is given by \eqref{(2.22)}.

In Section 3, we will show that matrices $\hat{B}_{k+1}(\mu)$ and $P_k$ have some good properties in the  RQN iteration, which is critical for the convergence analysis.

\subsection{ An Effective Acceleration Technique }
In order to optimize the performance of the algorithm, Sun et al. \cite{Sun21} proposed an acceleration technique, which replaces \eqref{(1.2)} with the following new iterative form
\begin{equation}\label{(2.28)}
x_{k+1} = x_{k} + \bar{\eta}_{k}\alpha_{k}d_{k},
\end{equation}
where $\bar{\eta}_{k} \geq 0 $ is an acceleration parameter obtained from an interpolation function. In view of the numerical effect of the acceleration technique, our algorithm also takes it into account. Similar to reference \cite{Sun21}, we minimize the following interpolation function to get the acceleration parameter $\bar{\eta}_{k}$:
\begin{equation}\label{(2.29)}
\bar{\eta}_{k} = \mathrm{arg} \ \mathrm{min} \ \ q( \varphi_{k}(\bar{\eta}) ),
\end{equation}
where $\bar{\eta} \geq 0, \  \varphi_{k}(\bar{\eta})=f(x_{k}+\bar{\eta} \alpha_{k}d_{k} ),$  and $ q( \varphi_{k}(\bar{\eta}) )$ represents the interpolation function defined by $\varphi_{k}(\bar{\eta})$.
In the paper, we consider minimizing the quadratic interpolation function \cite{Nocedal99} $ q( \varphi_{k}(0), \varphi^{\prime}_{k}(0), \varphi^{\prime}_{k}(1) ),$ then,
\begin{equation}\label{(2.30)}
\bar{\eta}_{k} = \mathrm{arg} \ \mathrm{min} \ \ q( \varphi_{k}(0), \varphi^{\prime}_{k}(0), \varphi^{\prime}_{k}(1)  ),
\end{equation}
By minimizing \eqref{(2.30)} we have
\begin{equation}\label{(2.31)}
\bar{\eta}_{k} = - \frac{\bar{a}_{k}}{\bar{b}_{k}}, \ \bar{b}_{k} \geq \bar{\epsilon},
\end{equation}
where  $\bar{a}_{k}= \alpha_{k}g^{T}_{k}d_{k},$ $\bar{b}_{k}=\alpha_{k}(g_{\bar{z}}-g_{k})^{T}d_{k},$ $g_{\bar{z}}=\nabla f(\bar{z}),$  $\bar{z}=x_{k}+\alpha_{k}d_{k}$ and $ \bar{\epsilon}>0$ is a small constant.

We propose the following acceleration criterion, which is simpler than the rule in reference { \cite{Sun21}}, that is
\begin{equation}\label{(2.32)}
\bar{b}_{k} \geq \bar{\epsilon},  \  \| s_{\bar{z}} \|^{2} \leq \bar{\tau},  \  \| g_{k} \|^{2} \leq \hat{\tau}, \ |\bar{t}_{k+1} |< {\bar{c}},  \ \mathrm{and} \ | s^{T}_{k}g_{\bar{z}}|\geq \mathrm{Max}(\varsigma,\bar{\varsigma}\cdot \bar{b}_{k})
\end{equation}
where $\bar{\epsilon}, \bar{\tau}, \hat{\tau},$ $\bar{c}, \varsigma $ and $\bar{\varsigma}$ are all small positive constants, $ \bar{b}_{k}=\alpha_{k}(g_{\bar{z}}-g_{k})^{T}d_{k},$ $s_{\bar{z}}= \bar{z} - x_{k}$, $\bar{z}=x_{k}+\alpha_{k}d_{k},$ $|\bar{t}_{k+1} |=| \frac{2(f_{k}-f_{\bar{z}}+g^{T}_{\bar{z}}s_{\bar{z}})}{s^{T}_{\bar{z}}g_{\bar{z}}} - 1 |$, $f_{\bar{z}}=f(\bar{z})$ and $g_{\bar{z}}=\nabla f(\bar{z}).$ When the condition \eqref{(2.32)} holds, we accelerate the algorithm and update the relevant variables. In addition, one of the necessary conditions for successful acceleration is that the trial iteration point must satisfy the line search condition. Therefore, if the algorithm accelerates successfully, update the iteration point $x_{k+1}$ by using \eqref{(2.28)}. Otherwise the algorithm acceleration fails and returns to the original algorithm, at which point $\bar{\eta}_k =1,$ update the iteration point $x_{k+1}$  with \eqref{(1.2)}.

In reference \cite{Sun21}, the acceleration criterion is divided into three cases, which seems to be more complex, while our acceleration criterion has only one case and the form is simpler.

\subsection{ Choices of the Initial Stepsize and the Generalized  Nonmonotone Wolfe Line Search }
It is well known that the design of the search direction and the conditions of the line search are two critical factors which affect the efficiency of the line search algorithm. In this subsection, we will develop an improved nonmonotone Wolfe line search which can be regarded as an extension of the Zhang-Hager's \cite{Zhang04} nonmonotone line search. In addition, an improved initial step selection strategy is designed.

For the sake of convenience, we express the one-dimensional line search function as
\begin{align*}
{\phi _k}(\alpha ) = f({x_k} + \alpha {d_k}),\alpha  \ge 0.
\end{align*}
The choice of the initial stepsize $\alpha^0_k$ is of great importance for a line search in an optimization method.  For the Newton-like methods, choosing the initial step $\alpha^0_k = 1$ is important to speed up convergence. For the conjugate gradient methods, it is essential to use information from the current iteration of the problem to make initial guesses \cite{Nocedal99}. In the conjugate gradient method, there have been various ways to choose the initial stepsize, for example, see \cite{Dai13,Hager05,Hager2013,Nocedal99}.   However, it did not have an agreement on which is the best.
In particular, Hager and Zhang \cite{Hager2013} select the initial step in CG\_DESCENT as below:
\begin{equation}\label{(2.33)}
\alpha^0_k = \left\{
\begin{array}{ll}
  \arg \min \bar{q}\left( {{\phi _k}\left( 0 \right),{{\phi '}_k}\left( 0 \right),{\phi _k}\left( \bar{\tau}_1 \alpha_{k-1} \right)} \right), & \; \mathrm{if} \; {\phi _k}\left( \bar{\tau}_1 \alpha_{k-1} \right) \leq {\phi _k}\left( 0 \right), \\
  \bar{\tau}_2 \alpha_{k-1},  & \mathrm{otherwise},
\end{array}\right.
\end{equation}
where $\bar{q}\left( {{\phi _k}\left( 0 \right),{{\phi '}_k}\left( 0 \right),{\phi _k}\left( \tau_1 \alpha_{k-1} \right)} \right)$  represents the interpolation function given by the three values ${\phi _k}\left( 0 \right),$ ${{\phi '}_k}\left( 0 \right)$ and ${\phi _k}\left( \tau_1 \alpha_{k-1} \right),$  $\bar{\tau}_1$ and $ \bar{\tau}_2$ are positive parameters. In CGOPT, Dai and Kou \cite{Dai13} determined the initial stepsize in the following way:
\begin{equation}\label{(2.34)}
\alpha^0_k = \left\{
\begin{array}{ll}
  \alpha & \; \mathrm{if} \;  \left| {\phi _k}\left( \alpha \right) -{\phi _k}\left( 0 \right) \right| / \left( \tau_3 + {\phi _k}\left( 0 \right) \right) > \tau_4, \\
  \arg \min \bar{q}\left( {{\phi _k}\left( 0 \right),{{\phi '}_k}\left( 0 \right),{\phi _k}\left( \alpha \right)} \right), \;  & \mathrm{otherwise},
\end{array}\right.
\end{equation}
where $\alpha = \max \left\{ \tau_5 \alpha_{k-1}, -2\left| f_k -f_{k-1} \right|/ {g^T_k d_k} \right\}$, $\tau_3 >0,$ $\tau_4 >0$ and $\tau_5 >0$. Most recently, Liu and Liu \cite{Liu19} discussed the development a very effective initial stepsize selection strategy  for SMCG method  by combining the BB methods and the interpolation technique.

Based on the above research, we devise an improved strategy to obtain the initial stepsize. We first consider the initial stepsize for the search direction in the RQN iteration.

(i) Initial stepsize of the search direction \eqref{(2.26)} with $B_{k+1}(\mu)\neq I$.

Since the search direction $\hat{d}$ is a quasi-Newton direction in the subspace $\mathcal{S}_k$, then the initial stepsize  $\alpha^0_k =1$ may be a good choice. Therefore, the trial initial stepsize can be stated as
\begin{equation}\label{(2.35)}
\alpha _k^0 = \left\{ \begin{array}{l}
{{\hat \alpha }_k},\;\;\;\;\;{\rm{    if\;\;\left( \eqref{(2.2)} \; or \; \varpi \leq \tau_2 \right)\;\; holds\;\; and\;\; }}{{\bar \alpha }_k} > 0,\\
1,\;\;\;\;\;\;\;{\rm{       otherwise,}}
\end{array} \right.
\end{equation}
where
\begin{align*}
{{\hat \alpha }_k} = \min \{ \max\{ {{\bar \alpha }_k},{\alpha_{\min }}\} ,{\alpha_{\max }}\},  \;\; {{\bar \alpha }_k} = \min \bar{q}({\phi _k}(0),{\phi _k}^\prime (0),{\phi _k}(1)),
\end{align*}
\begin{align*}
\varpi = \left| {\phi _k}\left( 1\right) - {\phi _k}\left( 0 \right) \right| / \left( \tau_1 + {\phi _k}\left( 0 \right) \right), \;\tau_1 >0, \; \tau_2 >0 \; \; {\rm{and}}\;\;{\alpha_{\max }} > {\alpha_{\min }} > 0.
\end{align*}
Here, $\bar{q}\left( {{\phi _k}\left( 0 \right),{{\phi '}_k}\left( 0 \right),{\phi _k}\left( 1 \right)} \right)$ is a quadratic interpolation function for ${{\phi _k}\left( 0 \right),}$ ${{{\phi '}_k}\left( 0 \right),}$ and ${{\phi _k}\left( 1 \right),}$ and ${\alpha_{\max }}$ and ${\alpha _{\min }}$ represent two positive constants.

(ii) Initial stepsize of the search direction \eqref{(2.26)} with $B_{k+1}(\mu) = I$.
\begin{equation}\label{(2.36)}
\alpha _k^0 = \left\{ \begin{array}{l}
{{\hat \alpha }_k},\;\;\;\;\;{\rm{    if\;\;\left( \eqref{(2.2)} \; or \; \varpi \leq \tau_2 \right)\;\; holds\;\; and\;\; }}{{\bar \alpha }_k} > 0,\\
\bar{\bar{\alpha}}_k,\;\;\;\;\;\;{\rm{otherwise,}}
\end{array} \right.
\end{equation}
where
\begin{equation}\label{(2.37)}
{\bar {\bar \alpha} _k} = \left\{ \begin{array}{l}
\max \{ \min \{ \alpha _k^{B{B_2}},{\alpha _{\max }}\} ,{\alpha _{\min }}\} ,\;\;{\rm{ if}}\;\;g_k^T{s_{k - 1}} > 0,\\
\max \{ \min \{ \alpha _k^{B{B_1}},{\alpha _{\max }}\} ,{\alpha _{\min }}\} ,\;\;{\rm{ if}}\;\;g_k^T{s_{k - 1}} \le 0,
\end{array} \right.
\end{equation}

For the initial stepsize of the search direction in the SMCG iteration. If the search direction $d_k$ is calculated by \eqref{(2.12)} with $d_k \neq -g_k$, the initial stepsize is chosen in the same way as the RQN iteration, which is determined by \eqref{(2.35)}. If the search direction $d_k$ is given by \eqref{(2.11)}, the initial stepsize is determined by
\begin{equation}\label{(2.38)}
\alpha _k^0 = \left\{ \begin{array}{l}
\min \{ \max\{ {\tilde{\tilde \alpha}  }_k,{\alpha _{\min }}\} ,{\alpha _{\max }}\} ,{\rm{\;\;if\;\;\eqref{(2.2)} \;\;holds,\;\;}} {\left\| {{g_k}} \right\|^2} \le 1, \;\; {d_{k - 1}} \ne  - {g_{k - 1}} {\rm{ \;\;and\;\; }}{\tilde{\tilde \alpha}  }_k > 0,\\
{{\bar {\bar \alpha} }_k},{\rm{ \quad \qquad\qquad\qquad\qquad\qquad otherwise}}{\rm{,}}
\end{array} \right.
\end{equation}
where ${{\bar {\bar \alpha} }_k}$ is determined by \eqref{(2.37)} and ${{\tilde{\tilde \alpha}  }_k} = \min q({\phi _k}(0),{\phi _k}^\prime (0),{\phi _k}({\bar {\bar \alpha} _k})).$

Next, we introduce a generalized line search condition, which can be regarded as a development of the Zhang-Hager's nonmonotone line search. We recall the nonmonotone line search introduced by Zhang and Hager \cite{Zhang04}
\begin{equation}\label{(2.39)}
f(x_{k} +  \alpha_k d_{k}) \le C_k + \delta \alpha_{k}  g_k^{ T}{d_k},
\end{equation}
where
\begin{equation}\label{(2.40)}
{C_{k + 1}} = \frac{{{\eta _k}{Q_k}{C_k} + f{_{k + 1}}}}{{{Q_{k + 1}}}}, \;\;  {Q_{k + 1}} = {\eta _k}{Q_k} + 1,
\end{equation}
$0 < \delta  < 1,$ and ${\eta _k} \in [0,1]$. From \eqref{(2.40)}, it is easy to see that $C_{k+1}$ is a convex combination of $f_{k+1}$ and $C_k.$ If $C_0 =f(x_0)$, it is thus clear that $C_k$ can be regard as a convex combination of the function values $f(x_0), f(x_1), \cdots, f(x_k)$. It means that $C_k$ can employ information about the known function values from the previous iteration. The Zhang-Hager's nonmonotone line search  \eqref{(2.39)}  is reduced to the standard Armijo line search condition when  $\eta_k=0$ for each $k$.

As it was reported in \cite{Zhang04}, the nonmonotone line search proposed by Zhang and Hager plays a crucial role in generating an appropriate stepsize compared to the monotone line search method. Based on \eqref{(2.39)} and \eqref{(2.40)}, Huang et al. \cite{Huang2015} presented a very effective nonmonotone line search technique, which can be regard as an extension of Zhang-Hager's nonmonotone line search, that is
\begin{equation}\label{(2.41)}
C_{k+1} = \frac{{{\eta _k}{Q_k}{C_k} + f_{k + 1}}}{{{Q_{k + 1}}}}  \le  C_k + \delta_{k} \alpha_{k}  g_k^{ T}{d_k},
\end{equation}
where $\eta_k \in [\eta_{\min}, \eta_{\max}]$, $\delta_{\max}<1,$ $0<\delta_{\min}<(1-\eta_{\max})\delta_{\max},$ $\delta_{\min}\leq \delta_{k} \leq \frac{\delta_{\max}}{Q_{k+1}}$ and $Q_{k+1}$ is computed by \eqref{(2.40)}.

Inspired by the previous discussion, we will study a generalized nonmonotone Wolfe line search technique based on \eqref{(2.40)} and \eqref{(2.41)}. Considering the acceleration technique, the generalized nonmonotone Wolfe line search conditions are as follows:
\begin{equation}\label{(2.42)}
C_{k+1}  \le  C_k + \delta_{k} \bar{\eta}_k \alpha_{k}  g_k^{ T}{d_k},
\end{equation}
\begin{equation}\label{(2.43)}
g_{k + 1}^{T}{d_k} \ge \sigma g_k^{T}{d_k},
\end{equation}
where $0<\delta_{\min}<\delta_k<\delta_{\max}<1,$ $ \sigma\in(0,1),$ ${Q_0} = 1,$ ${C_0} = {f_0},$  $\bar{\eta}_{k}$ is an acceleration parameter  determined by \eqref{(2.31)}, ${C_k}$ and ${Q_k}$ are updated as follows
\begin{equation}\label{(2.44)}
 {C_{k + 1}} = \frac{{{\eta _k}{Q_k}{C_k} + f({x_{k + 1}})}}{{{Q_{k + 1}}}},\; {Q_{k + 1}} = {\eta _k}{Q_k} + 1,\; f({x_{k + 1}})=f(x_k + \bar{\eta}_k \alpha_k d_k),
\end{equation}
where ${{\eta} _k} \in [0,1]$. Specially,
\begin{equation}\label{(2.45)}
 {Q_1} = 2.0, \;\;  {C_1} = \min \{ {C_0},{f_1} + 1.0\},
\end{equation}
when $k \ge 1,$ ${C_{k + 1}}$ and ${Q_{k + 1}}$ are updated by  \eqref{(2.44)}, and ${{\eta} _k}$ is given as
\begin{equation}\label{(2.46)}
{\eta} _k = \left\{
\begin{array}{ll}
  1, & \; \mathrm{if} \; C_k -f_{k+1} > 0.95|C_k| \; \mathrm{and} \; k>100, \\
  0.9, & \; \mathrm{otherwise}.
\end{array}
\right.
\end{equation}
Here $\eta_k$ is a parameter that controls the degree of non-monotonicity, referred to \cite{Liu18b}.

Furthermore, we demonstrate that the generalized nonmonotone Wolfe line search is an extension of the Zhang-Hager's nonmonotone Wolfe line search method. It follows from \eqref{(2.42)} that we get
\begin{equation}\label{(2.47)}
f(x_{k} + \bar{\eta}_k \alpha_k d_{k}) \le (Q_{k+1}-\eta_k Q_k) C_k + Q_{k+1} \delta_k \bar{\eta}_k \alpha_{k}  g_k^{ T}{d_k}.
\end{equation}
Since $Q_{k+1}-\eta_k Q_k=1$, \eqref{(2.42)} is equivalent to
\begin{equation}\label{(2.48)}
f(x_{k} +  \bar{\eta}_k\alpha_k d_{k}) \le C_k + Q_{k+1} \delta_k \bar{\eta}_k \alpha_{k}  g_k^{ T}{d_k},
\end{equation}
It is easy to see that if $\delta_k = \frac{\delta}{Q_{k+1}}$,  nonmonotone line search condition \eqref{(2.48)}  reduces to the
Zhang-Hager's nonmonotone Wolfe line search condition \eqref{(2.39)}. This means that the Zhang-Hager's nonmonotone Wolfe line search condition in \cite{Zhang04} can be considered as a particular version of \eqref{(2.42)}.

\subsection{ A Regularized Limited Memory Subspace Minimization Conjugate Gradient Algorithm(RL\_SMCG) }
In this subsection, we describe the regularized limited memory subspace minimization conjugate gradient algorithm in detail. As mentioned above, the regularized limited memory subspace minimization conjugate gradient algorithm is made of two kinds of iterations. The ``state" in Algorithm 1 represents for the type of iteration, i.e., state= ``SMCG" means that SMCG iteration will be carried out, and state= ``RQN" means that RQN iteration will be performed.

\vspace{0.3cm}
\begin{breakablealgorithm}
\begin{spacing}{1.4}
	\caption{RL\_SMCG}
\noindent  \textbf{Step 0.} Chosen ${x_0} \in {\mathbb{R}^n},$ $\varepsilon  > 0,$  ${\tilde{\eta}}_{0},$ ${\tilde{\eta}}_{1},$ $\upsilon,$ $m,$ $\xi_{1},$ $\xi_{2},$ $\xi_{3},$ $\xi_{4},$ $\xi_{5},$ $\sigma_1,$ $\sigma_2,$ $\sigma_3,$ $\mu_{\min},$ $\mu_{\max},$ $\tau,$ $\bar{\tau},$ $\bar{c},$ $\varsigma,$ $\bar{\varsigma},$ $\bar{\epsilon},$ $\tau_1,$ $\tau_2,$ $\delta_k,$  $\sigma,$ IterRestart $:=0$, IterQuad $:=0$ and MinQuad. Set state = ``SMCG" and $k:=0$.\\
\textbf{Step 1.} If ${\left\| {{g_k}} \right\|_\infty } \le \varepsilon $, stop.\\
\textbf{Step 2.} Compute the search direction.\\
\hspace*{1.2cm} If (state = ``SMCG"), then\\
\hspace*{1.8cm} If $k=0,$ then $d_0 = -g_0.$\\
\hspace*{1.8cm} elseif (IterQuad $=$ MinQuad and IterQuad $\neq$ IterRestart), set\\
\hspace*{2.5cm} $d_k = -g_k,$  IterQuad $=0,$ and IterRestart $=0.$ \\
\hspace*{1.8cm} else \\
\hspace*{2.5cm} Determine the search direction $d_k$ by \eqref{(2.12)}.\\
\hspace*{1.8cm} end\\
\hspace*{1.2cm} elseif (state = ``RQN"), then \\
\hspace*{1.8cm} Compute $P_k$ by \eqref{(2.27)}, and compute the search direction $d_k$ by \eqref{(2.26)}.\\
\hspace*{1.2cm} end\\
\textbf{Step 3.} Determine the corresponding initial step size ${\alpha^{0}_k}$ from \eqref{(2.35)}, \eqref{(2.36)} and \eqref{(2.38)} according to the different iteration directions in the Step 2.\\
\textbf{Step 4.} Determine a stepsize $\alpha_k$ satisfying the generalized nonmonotone Wolfe line search \eqref{(2.42)} and \eqref{(2.43)} with initial stepsize ${\alpha^{0}_k}$.\\
\textbf{Step 5.}Compute the trial iteration $\bar{z}=x_{k}+\alpha_{k}d_{k}$ and $g_{\bar{z}}=\nabla f(\bar{z})$. If ${\left\| {{g_{\bar{z}}}} \right\|_\infty } \le \varepsilon$, then stop; otherwise, go to Step 6.\\
\textbf{Step 6.} Acceleration procedure.\\
\hspace*{1.2cm}If the condition \eqref{(2.32)} holds, then go to 6.1.\\
\hspace*{1.8cm}\textbf{6.1.} \ Compute $\bar{a}_{k}= \alpha_{k}g^{T}_{k}d_{k} $, $ \bar{b}_{k}=\alpha_{k}(g_{\bar{z}}-g_{k})^{T}d_{k}$ and  $\bar{\eta}_{k}$ by \eqref{(2.31)}.\\
\hspace*{1.8cm}\textbf{6.2.} \ Update the iteration point as ${x_{k + 1}} = {x_k} + \bar{\eta}_{k}{\alpha _k}{d_k}$ and compute $f_{k+1}$ and $g_{k+1}.$\\
\hspace*{1.8cm}\textbf{6.3.} \ If $f_{k+1}$ satisfies \eqref{(2.42)} and  $g_{k+1}$ satisfies \eqref{(2.43)}, go to Steps 8. Otherwise, go to Steps 7. \\
\hspace*{1.2cm} else\\
\hspace*{1.8cm} go to Steps 7.\\
\hspace*{1.2cm} end\\
\textbf{Step 7.} Update the variable as ${x_{k + 1}} = {x_k} + {\alpha _k}{d_k}$. Compute $f_{k+1}$ and $g_{k+1}$. \\
\textbf{Step 8.} Update restart conditions. \\
\textbf{Step 9.} Update $ Q_{k+1}$ and $C_{k+1}$ with \eqref{(2.44)}.\\
\textbf{Step 10.} Update iteration type.\\
\hspace*{1.2cm} If (state = ``SMCG"), then\\
\hspace*{1.8cm} If  \eqref{(2.16)} holds, then state = ``RQN".\\
\hspace*{1.2cm} elseif (state = ``RQN"), then \\
\hspace*{1.8cm} If  \eqref{(2.17)} holds, then state = ``SMCG".\\
\hspace*{1.2cm} end\\
\textbf{Step 11.} Set $k:=k+1$ and go to Step 1.
    \end{spacing}
\end{breakablealgorithm}
\vspace{0.5cm}

\textbf{\emph{Remark 3}.} Notably, when the lost orthogonality is corrected, our algorithm  terminates the RQN iteration and immediately calls the SMCG iteration. However, the limited memory CG method \cite{Hager2013} first carries out  the complex preprocessing CG iteration after the orthogonality is improved. This means that algorithm RL\_SMCG is more simple compared to the limited memory CG method \cite{Hager2013}.

\section{Convergence Analysis}\label{sec3}

In the section, we establish the global convergence of the algorithm RL\_SMCG under the following assumptions and properties.

Define $\mathcal{N} $ to be an open neighborhood of the level set $L\left( {{x_0}} \right) = \left\{ {x \in {R^n}:f\left( x \right) \le f\left( {{x_0}} \right)} \right\},$ where ${{x_0}}$ is an initial point.

\begin{assumption}\label{Ass1}
(i) The objective function $f$ is continuously differentiable in $\mathcal{N}$ and the level set is bounded from below.
(ii) The gradient $g$ of the objective function is Lipschitz continuous in $\mathcal{N},$ i.e., there exists a constant $L > 0$ such that $\left\| {g(x) - g(y)} \right\| \le L\left\| {x - y} \right\|, \forall x, y \in \mathcal{N}.$
\end{assumption}

Under these assumptions, we have the following several properties.
\begin{lemma}\label{Lem1}
Suppose that Assumption \ref{Ass1} holds. Then, for $\hat{B}_{k+1}(\mu )$ in \eqref{(2.22)}, there exist three constants $\hat{\xi}_{1}>0, \hat{\xi}_{2}>0 $ and $\hat{\xi}_{3}>0$ such that
\[
\lambda_{\max}\left(\hat{B}_{k+1} (\mu )\right)\leq \hat{\xi}_{1}, \ \lambda_{\max}\left(\hat{B}^{-1}_{k+1} (\mu )\right)\leq \hat{\xi}_{2}, \ \left\|\hat{B}^{-1}_{k+1} (\mu )\right\| \leq \hat{\xi}_{3}.
\]
\end{lemma}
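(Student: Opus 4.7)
The plan is to invoke the classical Byrd--Nocedal trace/determinant argument for BFGS, adapted to our modified update in \eqref{(2.22)}, and to exploit the two safeguards built into that formula: the periodic reset $\hat{B}_k(\mu)=\hat{I}$ every $l=\max(m^2,20)$ steps, and the curvature-like screening $\hat{s}_k^T\hat{y}_k(\mu)/\|\hat{s}_k\|^2 \ge \upsilon$ that must hold whenever a nontrivial update is performed. Together these confine the update history and allow uniform eigenvalue bounds.

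First I would collect the elementary bounds on the update pair $(\hat{s}_k,\hat{y}_k(\mu))$. Using $\hat{y}_k=Z_k^T y_k$, $\hat{s}_k^T\hat{s}_k=s_k^T s_k$, the isometry property of $Z_k$, and Assumption~\ref{Ass1}(ii), one obtains $\|\hat{y}_k\|\le L\|\hat{s}_k\|$, and since $\mu\in[\mu_{\min},\mu_{\max}]$ (or $\mu=0$) the definition $\hat{y}_k(\mu)=\hat{y}_k+\mu\hat{s}_k$ yields
\[
\|\hat{y}_k(\mu)\|\le (L+\mu_{\max})\|\hat{s}_k\|,\qquad \hat{s}_k^T\hat{y}_k(\mu)\ge \upsilon\|\hat{s}_k\|^2,
\]
the second inequality coming from the explicit screening condition in \eqref{(2.22)}. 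Consequently
\[
\frac{\|\hat{y}_k(\mu)\|^2}{\hat{s}_k^T\hat{y}_k(\mu)}\le \frac{(L+\mu_{\max})^2}{\upsilon}=:M.
\]

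Next I would apply the trace and determinant identities for the BFGS formula. Taking trace of \eqref{(2.22)} and discarding the nonnegative term $\|\hat{B}_k(\mu)\hat{s}_k\|^2/(\hat{s}_k^T\hat{B}_k(\mu)\hat{s}_k)$,
\[
\mathrm{tr}\bigl(\hat{B}_{k+1}(\mu)\bigr)\le \mathrm{tr}\bigl(\hat{B}_k(\mu)\bigr)+M.
\]
Since $\hat{B}_k(\mu)$ is reset to $\hat{I}\in\mathbb{R}^{m\times m}$ whenever $\bmod(k,l)=0$, between two consecutive resets at most $l$ updates occur, so a trivial induction gives
\[
\lambda_{\max}\bigl(\hat{B}_{k+1}(\mu)\bigr)\le \mathrm{tr}\bigl(\hat{B}_{k+1}(\mu)\bigr)\le m+lM =: \hat{\xi}_1.
\]
For the determinant, the standard identity
\[
\det\bigl(\hat{B}_{k+1}(\mu)\bigr)=\det\bigl(\hat{B}_k(\mu)\bigr)\,\frac{\hat{s}_k^T\hat{y}_k(\mu)}{\hat{s}_k^T\hat{B}_k(\mu)\hat{s}_k}
\]
combined with $\hat{s}_k^T\hat{B}_k(\mu)\hat{s}_k\le \hat{\xi}_1\|\hat{s}_k\|^2$ and the curvature lower bound yields the per-step factor $\ge \upsilon/\hat{\xi}_1$, hence after at most $l$ updates from the reset $\det(\hat{I})=1$,
\[
\det\bigl(\hat{B}_{k+1}(\mu)\bigr)\ge \bigl(\upsilon/\hat{\xi}_1\bigr)^{l}.
\]
Combining with the upper eigenvalue bound via $\lambda_{\min}(\hat{B}_{k+1}(\mu))\ge \det(\hat{B}_{k+1}(\mu))/\hat{\xi}_1^{m-1}$ delivers
\[
\lambda_{\max}\bigl(\hat{B}_{k+1}^{-1}(\mu)\bigr)=\frac{1}{\lambda_{\min}(\hat{B}_{k+1}(\mu))}\le \frac{\hat{\xi}_1^{m-1}}{(\upsilon/\hat{\xi}_1)^{l}}=:\hat{\xi}_2,
\]
and since $\hat{B}_{k+1}^{-1}(\mu)$ is symmetric positive definite, $\|\hat{B}_{k+1}^{-1}(\mu)\|=\lambda_{\max}(\hat{B}_{k+1}^{-1}(\mu))\le \hat{\xi}_2=:\hat{\xi}_3$. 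The reset branch $\hat{B}_{k+1}(\mu)=\hat{I}$ satisfies all three bounds trivially provided $\hat{\xi}_i\ge 1$, which the above constants already ensure.

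The only real obstacle is making sure the reset mechanism in \eqref{(2.22)} is invoked in the right way: without it, the trace inequality accumulates indefinitely and the determinant can drift to zero, so both bounds would fail. Because the reset guarantees that at most $l$ BFGS updates separate any index $k{+}1$ from the last identity restart, and because the curvature safeguard provides a uniform lower bound on $\hat{s}_k^T\hat{y}_k(\mu)/\|\hat{s}_k\|^2$ whenever an update is actually performed, the Byrd--Nocedal-style argument closes cleanly and gives the three claimed constants $\hat{\xi}_1,\hat{\xi}_2,\hat{\xi}_3$ depending only on $L,\mu_{\max},\upsilon,m,l$.
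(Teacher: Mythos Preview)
Your proof is correct, but it follows a genuinely different route from the paper's. You use the classical Byrd--Nocedal trace/determinant machinery: a trace recursion $\mathrm{tr}(\hat{B}_{k+1})\le \mathrm{tr}(\hat{B}_k)+M$ to bound $\lambda_{\max}(\hat{B}_{k+1})$, and the BFGS determinant identity $\det(\hat{B}_{k+1})=\det(\hat{B}_k)\,\hat{s}_k^T\hat{y}_k(\mu)/\hat{s}_k^T\hat{B}_k(\mu)\hat{s}_k$ to bound $\lambda_{\min}(\hat{B}_{k+1})$ from below. The paper instead runs two separate eigenvalue recursions: for $\hat{B}_k(\mu)$ it applies Weyl's inequality directly to the rank-two update in \eqref{(2.22)} to get $\lambda_{\max}(\hat{B}_{k+1})\le \lambda_{\max}(\hat{B}_k)+\|\hat{y}_k(\mu)\|^2/\hat{s}_k^T\hat{y}_k(\mu)$; for $\hat{B}_k^{-1}(\mu)$ it writes out the inverse BFGS formula \eqref{(3.2)} explicitly and bounds the quadratic form $\hat{z}^T\hat{P}_k(\mu)\hat{z}$ to obtain a linear recursion $\lambda_{\max}(\hat{P}_k)\le c_1\lambda_{\max}(\hat{P}_{k-1})+c_2$. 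Both arguments hinge on the same two safeguards you identified (the curvature threshold $\upsilon$ and the periodic reset after $l$ updates), and both yield constants depending only on $L,\mu_{\max},\upsilon,m,l$. Your trace/determinant route is the more standard textbook argument and is arguably cleaner, since it avoids writing out and manipulating the inverse update formula; the paper's direct eigenvalue recursion is slightly more elementary in that it never invokes the determinant identity, at the cost of a longer computation for the inverse bound.
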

\begin{proof}
We know that $Z_{k}$ is a normal orthogonal basis of $\mathcal{S}_{k}$ and the dimension $m < +\infty,$ hence we have $\xi_0>0$ such that $\|Z_{k}\| \leq \xi_0.$ According to  \eqref{(2.22)} and the property of the matrix norm in finite dimensional spaces, we can get that $\lambda_{\max}\left(\hat{B}_k (\mu)\right)=1$ or
\begin{align}\label{(3.1)}
\lambda_{\max}\left(\hat{B}_{k+1} (\mu)\right) & \leq \lambda_{\max}\left(\hat{B}_k (\mu)\right) + \lambda_{\max}\left(-\frac{\hat{B}_k (\mu)\hat{s}_k \hat{s}^T_k \hat{B}_k (\mu)}{\hat{s}^T_k\hat{B}_k (\mu)\hat{s}_k}  \right) + \lambda_{\max}\left(\frac{\hat{y}_k(\mu) \hat{y}^T_k(\mu)}{\hat{s}^T_k\hat{y}_k(\mu)} \right) \\ \nonumber
& \leq \lambda_{\max}\left(\hat{B}_k (\mu)\right) + \frac{\hat{y}^T_k(\mu) \hat{y}_k(\mu)}{\hat{s}^T_k\hat{y}_k(\mu)}.
\end{align}
Further, by $\hat{y}_k(\mu) = \hat{y}_k + \mu \hat{s}_k,$ $\mu>0$, we get
\begin{align*}
\frac{\hat{y}^T_k(\mu) \hat{y}_k(\mu)}{\hat{s}^T_k\hat{y}_k(\mu)} &= \frac{\|\hat{y}_k\|^{2} +\mu^2_k \|\hat{s}_k\|^2 +  2\mu \hat{s}^T_k \hat{y}_k }{\hat{s}^T_k \hat{y}_k +\mu \|\hat{s}_k\|^2}  \\ \nonumber
& = \frac{\|\hat{y}_k\|^{2}+ \mu \hat{s}^T_k \hat{y}_k}{\hat{s}^T_k \hat{y}_k +\mu \|\hat{s}_k\|^2} +\frac{\mu \hat{s}^T_k \hat{y}_k +\mu^2_k \|\hat{s}_k\|^2}{\hat{s}^T_k \hat{y}_k +\mu \|\hat{s}_k\|^2} \\ \nonumber
& \leq \frac{\|\hat{y}_k\|^{2}+ \mu \hat{s}^T_k \hat{y}_k}{\hat{s}^T_k \hat{y}_k } + \mu \\ \nonumber
& \leq \frac{L^2 \xi^2_0 \|\hat{s}_k\|^2}{\hat{s}^T_k \hat{y}_k } +2\mu \\ \nonumber
& \leq \frac{L^2 \xi^2_0 }{\upsilon}  +2\mu_{\max}.
\end{align*}
The fourth inequality above is obtained from $\hat{y}_k = Z^T_k y_k, \|Z_k\| \leq \xi_0$ and Assumption \ref{Ass1} (ii).
Because $\hat{B}_k(\mu)$ will be set to $\hat{I}$ after a maximum of $l$ updates, combining with \eqref{(3.1)} easy to get $\lambda_{\max}\left(\hat{B}_{k+1} (\mu)\right)  \leq 1+ \frac{l L^2 \xi^2_0 }{\upsilon} +2l\mu_{\max} \triangleq \hat{\xi}_1. $

Let $\hat{P}_k(\mu) = \hat{B}^{-1}_{k+1}(\mu).$ According to  \eqref{(2.22)} and some simple matrix operations, we have that $\hat{P}_k(\mu) = \hat{I}$ or
\begin{equation}\label{(3.2)}
\hat{P}_k(\mu) = \left( \hat{I} - \frac{\hat{y}_k(\mu) \hat{s}^T_k}{\hat{s}^T_k\hat{y}_k(\mu)}\right)^T \hat{P}_{k-1}(\mu)\left( \hat{I} - \frac{\hat{y}_k(\mu)\hat{s}^T_k}{\hat{s}^T_k\hat{y}_k(\mu)}\right) +\frac{\hat{s}_k\hat{s}^T_k}{\hat{s}^T_k\hat{y}_k(\mu)}.
\end{equation}
It is not difficult to that $\lambda_{\max}\left( \left( \hat{I} - \frac{\hat{y}_k(\mu)\hat{s}^T_k}{\hat{s}^T_k\hat{y}_k(\mu)}\right)^T \left( \hat{I} - \frac{\hat{y}_k(\mu)\hat{s}^T_k}{\hat{s}^T_k\hat{y}_k(\mu)}\right) \right) =\frac{\|\hat{y}_k(\mu)\|^2 \|\hat{s}_k\|^2}{\left(\hat{s}^T_k\hat{y}_k(\mu)\right)^2 } $. For any $\hat{z}\neq 0 \in \mathbb{R}^m$ and $\hat{P}_k(\mu)$ in \eqref{(3.2)}, we have
\begin{align*}
\hat{z}^T \hat{P}_k(\mu)\hat{z} &= \hat{z}^T \left( \hat{I} - \frac{\hat{y}_k(\mu) \hat{s}^T_k}{\hat{s}^T_k\hat{y}_k(\mu)}\right)^T \hat{P}_{k-1}(\mu)\left( \hat{I} - \frac{\hat{y}_k(\mu)\hat{s}^T_k}{\hat{s}^T_k\hat{y}_k(\mu)}\right)\hat{z} + \frac{\left(\hat{s}^T_k\hat{z}\right)^2}{\hat{s}^T_k\hat{y}_k(\mu)} \\ \nonumber
& \leq \lambda_{\max} \left(\hat{P}_{k-1}(\mu)\right) \hat{z}^T \left( \hat{I} - \frac{\hat{y}_k(\mu)\hat{s}^T_k}{\hat{s}^T_k\hat{y}_k(\mu)}\right)^T \left( \hat{I} - \frac{\hat{y}_k(\mu)\hat{s}^T_k}{\hat{s}^T_k\hat{y}_k(\mu)}\right) \hat{z} + \frac{\left(\hat{s}^T_k\hat{z}\right)^2}{\hat{s}^T_k\hat{y}_k(\mu)} \\ \nonumber
& \leq  \lambda_{\max} \left(\hat{P}_{k-1}(\mu)\right) \lambda_{\max}\left( \left( \hat{I} - \frac{\hat{y}_k(\mu)\hat{s}^T_k}{\hat{s}^T_k\hat{y}_k(\mu)}\right)^T \left( \hat{I} - \frac{\hat{y}_k(\mu)\hat{s}^T_k}{\hat{s}^T_k\hat{y}_k(\mu)}\right) \right) \|\hat{z}\|^2 + \frac{\left(\hat{s}^T_k\hat{z}\right)^2}{\hat{s}^T_k\hat{y}_k(\mu)} \\ \nonumber
& \leq  \lambda_{\max} \left(\hat{P}_{k-1}(\mu)\right) \frac{\|\hat{y}_k(\mu)\|^2 \|\hat{s}_k\|^2}{\left(\hat{s}^T_k\hat{y}_k(\mu)\right)^2 } \|\hat{z}\|^2 + \frac{\|\hat{s}_k\|^2}{\hat{s}^T_k\hat{y}_k(\mu)}\|\hat{z}\|^2.
\end{align*}
The above inequality is divided by $\|\hat{z}\|^2$, and the resulting inequality is maximized, then we have
\begin{align*}
\lambda_{\max} \left(\hat{P}_{k }(\mu)\right) & \leq \lambda_{\max} \left(\hat{P}_{k-1}(\mu)\right) \frac{\|\hat{y}_k(\mu)\|^2 \|\hat{s}_k\|^2}{\left(\hat{s}^T_k\hat{y}_k(\mu)\right)^2 } + \frac{\|\hat{s}_k\|^2}{\hat{s}^T_k\hat{y}_k(\mu)}   \\ \nonumber
& \leq  \lambda_{\max} \left(\hat{P}_{k-1}(\mu)\right) \left( \frac{\|\hat{y}_k(\mu)\|^2}{ \hat{s}^T_k\hat{y}_k(\mu) \frac{  \|\hat{s}_k\|^2}{ \hat{s}^T_k\hat{y}_k(\mu)  } } \right) + \frac{\|\hat{s}_k\|^2}{\hat{s}^T_k\hat{y}_k } \\ \nonumber
& \leq \lambda_{\max} \left(\hat{P}_{k-1}(\mu)\right) \left( \frac{L^2 \xi^2_0}{\upsilon} +2\mu_{\max}\right) \frac{\|\hat{s}_k\|^2}{\hat{s}^T_k\hat{y}_k }  + \frac{\|\hat{s}_k\|^2}{\hat{s}^T_k\hat{y}_k }  \\ \nonumber
& \leq \left(\frac{L^2 \xi^2_0}{\upsilon^2}+\frac{2\mu_{\max}}{\upsilon} \right) \lambda_{\max} \left(\hat{P}_{k-1}(\mu)\right) + \frac{1}{\upsilon}.
\end{align*}
The third inequality above is obtained from $\hat{y}_k = Z^T_k y_k, \|Z_k\| \leq \xi_0$ and Assumption \ref{Ass1} (ii).
Because $\hat{P}_k(\mu)$ will be set to $\hat{I}$ after a maximum of $l$ updates, it is easy to know that there exists a constant $\hat{\xi}_2 >0$ such that $\lambda_{\max} \left(\hat{B}^{-1}_{k+1}(\mu)\right) = \lambda_{\max} \left(\hat{P}_{k }(\mu)\right) \leq \hat{\xi}_2.$

Since $\hat{B}^{-1}_{k+1}(\mu)$ is a  positive definite and symmetric matrix, we have $\left\|\hat{B}^{-1}_{k+1}(\mu)\right\|_2 = \lambda_{\max} \left(\hat{B}^{-1}_{k+1}(\mu)\right) \leq \hat{\xi}_2.$  As a result, using the equivalence property of matrix norm in a finite dimensional space, it follows that there exists a constant $\hat{\xi}_3 >0$ such that $\left\|\hat{B}^{-1}_{k+1}(\mu)\right\| \leq \hat{\xi}_3.$ The proof is completed.
\qed
\end{proof}

\begin{lemma}\label{Lem2}
Suppose that Assumption \ref{Ass1} holds. Then, for $P_k$ in \eqref{(2.27)}, there exist three constants $\gamma_0 >0, \gamma_1 >0$ and $\gamma_2 >0$ such that
\begin{equation}\label{(3.3)}
\left\|P_k\right\|  \leq \gamma_0, \ g^T_{k+1}P_k g_{k+1} \geq \gamma_1 \left\|g_{k+1}\right\|^2, \ d^T_k P^{-1}_k d_k \geq \gamma_2 \left\|d_k\right\|^2,
\end{equation}
where $P^{-1}_k$ denotes the pseudoinverse of $P_k$.
\end{lemma}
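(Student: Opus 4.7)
The plan is to establish the three inequalities of \eqref{(3.3)} in order, each leveraging Lemma~\ref{Lem1} together with the explicit structure $P_k = Z_k \hat{B}^{-1}_{k+1}(\mu) Z^T_k$ and the orthonormality $Z^T_k Z_k = I_m$, with the second bound additionally relying on the fact that during an RQN iteration the switch-back condition \eqref{(2.17)} has not yet been triggered.

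First, for $\|P_k\| \leq \gamma_0$, I would apply submultiplicativity of the matrix norm, $\|P_k\| \leq \|Z_k\|\,\|\hat{B}^{-1}_{k+1}(\mu)\|\,\|Z^T_k\|$, and combine the bound $\|Z_k\| \leq \xi_0$ used already in the proof of Lemma~\ref{Lem1} with $\|\hat{B}^{-1}_{k+1}(\mu)\| \leq \hat{\xi}_3$ from that same lemma, so that $\gamma_0 := \xi_0^2 \hat{\xi}_3$ works.

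Second, for $g^T_{k+1} P_k g_{k+1} \geq \gamma_1 \|g_{k+1}\|^2$, I would rewrite the quadratic form as $(Z^T_k g_{k+1})^T \hat{B}^{-1}_{k+1}(\mu) (Z^T_k g_{k+1})$ and bound it below by $\lambda_{\min}(\hat{B}^{-1}_{k+1}(\mu))\,\|Z^T_k g_{k+1}\|^2$. By Lemma~\ref{Lem1}, $\lambda_{\min}(\hat{B}^{-1}_{k+1}(\mu)) = 1/\lambda_{\max}(\hat{B}_{k+1}(\mu)) \geq 1/\hat{\xi}_1$. Since an RQN iteration is performed precisely when condition \eqref{(2.17)} has not yet been triggered at $x_{k+1}$, we have $\|Z^T_k g_{k+1}\|^2 > (1-\tilde{\eta}_1^2)\|g_{k+1}\|^2$, and hence $\gamma_1 := (1-\tilde{\eta}_1^2)/\hat{\xi}_1$ suffices.

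Third, the inequality $d^T_k P^{-1}_k d_k \geq \gamma_2 \|d_k\|^2$ is the most delicate because $P_k$ has rank at most $m < n$, so $P^{-1}_k$ must be interpreted as the Moore--Penrose pseudoinverse. Here I would exploit $Z^T_k Z_k = I_m$ to write the pseudoinverse explicitly as $P^+_k = Z_k \hat{B}_{k+1}(\mu) Z^T_k$. Since in an RQN iteration $d_k = Z_k \hat{d}_k$ lies in $\mathrm{range}(Z_k)$, one has $\|d_k\|^2 = \hat{d}^T_k Z^T_k Z_k \hat{d}_k = \|\hat{d}_k\|^2$ and $d^T_k P^+_k d_k = \hat{d}^T_k \hat{B}_{k+1}(\mu) \hat{d}_k \geq \lambda_{\min}(\hat{B}_{k+1}(\mu))\|\hat{d}_k\|^2 \geq (1/\hat{\xi}_2)\|d_k\|^2$, so $\gamma_2 := 1/\hat{\xi}_2$ works.

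The main obstacle I anticipate is this third inequality: one must justify that $P^{-1}_k$ is to be read as a pseudoinverse, verify the explicit formula $P^+_k = Z_k \hat{B}_{k+1}(\mu) Z^T_k$ cleanly from $Z^T_k Z_k = I_m$, and confirm that the relevant iterates $d_k$ lie in the column space of $Z_k$ so that the reduction to an $m \times m$ quadratic form in $\hat{d}_k$ is legitimate. Once these points are clarified, every remaining step is a direct application of Lemma~\ref{Lem1} together with the transition criteria of the RQN iteration.
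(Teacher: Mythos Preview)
Your proposal is correct and follows essentially the same route as the paper: each of the three bounds is obtained by rewriting the quadratic form through $Z_k$, invoking the eigenvalue bounds of Lemma~\ref{Lem1}, and (for the second inequality) using that the exit condition \eqref{(2.17)} has not been triggered during the RQN iteration. The only cosmetic difference is that for the first bound the paper uses the equality $\|Z_k \hat{B}^{-1}_{k+1}(\mu) Z_k^T\| = \|\hat{B}^{-1}_{k+1}(\mu)\|$ (valid since $Z_k$ has orthonormal columns) to get $\gamma_0 = \hat{\xi}_3$ directly, whereas you apply submultiplicativity with the cruder bound $\|Z_k\|\leq \xi_0$; your extra care in justifying the pseudoinverse formula $P_k^+ = Z_k \hat{B}_{k+1}(\mu) Z_k^T$ and that $d_k \in \mathrm{range}(Z_k)$ is in fact more explicit than the paper's own argument.
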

\begin{proof}
By \eqref{(2.17)}, \eqref{(2.27)} and Lemma \ref{Lem1}, we obtain that
\begin{align*}
\left\|P_k\right\| =  \left\|Z_k \hat{B}^{-1}_{k+1}(\mu)Z^T_k \right\|=\left\| \hat{B}^{-1}_{k+1}(\mu) \right\| \leq \hat{\xi}_3 \triangleq \gamma_0,
\end{align*}
\begin{align*}
g^T_{k+1}P_k g_{k+1} & = g^T_{k+1}Z_k \hat{B}^{-1}_{k+1}(\mu)Z^T_k g_{k+1} \\ \nonumber
& = \hat{g}^T_{k+1} \hat{B}^{-1}_{k+1}(\mu)  \hat{g}_{k+1} \\ \nonumber
& \geq \lambda_{\min}\left(\hat{B}^{-1}_{k+1}(\mu)\right) \left\| \hat{g}_{k+1}\right\|^2  \\ \nonumber
& \geq \frac{1}{\hat{\xi}_1}\left(1-\tilde{\eta}^2_1\right)\left\|  {g}_{k+1}\right\|^2 \triangleq \gamma_1 \left\|g_{k+1}\right\|^2,
\end{align*}
\begin{align*}
d^T_k P^{-1}_k d_k = d^T_k Z_k \hat{B}^{-1}_{k+1}(\mu)Z^T_k d_k = \hat{d}^T_k \hat{B}^{-1}_{k+1}(\mu) \hat{d}_k \geq \frac{1}{\hat{\xi}_2} \left\|\hat{d}_k\right\|^2 =\frac{1}{\hat{\xi}_2} \left\|d_k\right\|^2 \triangleq \gamma_2 \left\|d_k\right\|^2.
\end{align*}
Therefore, we can get the conclusions. The proof is completed.
\qed
\end{proof}

Subsequently, we provide some properties of the search directions produced by the algorithm RL\_SMCG, which are crucial for the following convergence analysis.

\begin{lemma}\label{Lem3}
Suppose that Assumption \ref{Ass1} holds. Then, there exists a constant $c_1>0$ such that
the search directions  \eqref{(2.12)} and \eqref{(2.26)} are calculated by  algorithm  RL\_SMCG satisfy the sufficient descent condition:
\begin{equation}\label{(3.4)}
g_{k}^T{d_{k}} \leq  - {\bar{c}_1}{\left\| {{g_{k}}} \right\|^2}.
\end{equation}
\end{lemma}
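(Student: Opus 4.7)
The plan is to verify the sufficient descent inequality separately in each of the five direction scenarios produced by algorithm RL\_SMCG---the four SMCG sub-cases of \eqref{(2.12)} and the RQN direction \eqref{(2.26)}---and then take $\bar{c}_1$ to be the minimum of the constants obtained in each case.

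The two easy cases are dispatched first. When $d_k=-g_k$ (the fourth branch of \eqref{(2.12)}), the inequality holds trivially with constant $1$. For the HS branch, using $s_{k-1}=\bar{\eta}_{k-1}\alpha_{k-1}d_{k-1}$ so that $d_{k-1}$ and $s_{k-1}$ are parallel, a direct computation gives
\[
g_k^T d_k \;=\; -\|g_k\|^2 + \frac{g_k^T y_{k-1}\, g_k^T s_{k-1}}{s_{k-1}^T y_{k-1}},
\]
and the first inequality in \eqref{(2.10)} bounds the second term in absolute value by $\bar{\xi}_3\|g_k\|^2$, so $g_k^T d_k\leq -(1-\bar{\xi}_3)\|g_k\|^2$ (with $\bar{\xi}_3\leq 1$).

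For the unregularized subspace direction (case (ii) of \eqref{(2.12)}), I will use that $(\bar u_k,\bar v_k)^T=-\bar B_k^{-1}\hat g_k$ is the unconstrained minimizer of the two-dimensional quadratic model, so $g_k^T d_k=-\hat g_k^T \bar B_k^{-1}\hat g_k$ with $\hat g_k=(\|g_k\|^2, g_k^T s_{k-1})^T$. Completing the square in the $v$-coordinate (equivalently, invoking the Schur-complement identity for $\bar B_k^{-1}$) yields the clean lower bound $\hat g_k^T \bar B_k^{-1}\hat g_k\geq \|g_k\|^4/\rho_k$. Substituting $\rho_k=\tfrac{3}{2}(\|y_{k-1}\|^2/s_{k-1}^T y_{k-1})\|g_k\|^2$ and invoking \eqref{(2.3)} produces $g_k^T d_k\leq -\tfrac{2}{3\bar{\xi}_2}\|g_k\|^2$. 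The regularized direction (case (i)) differs only by the positive scalar $1/(1+\sigma_k\varpi^*)$, so once one imports from \cite{Zhao21} that $\sigma_k\varpi^*$ admits a uniform upper bound $M$, the same argument delivers the constant $2/(3\bar{\xi}_2(1+M))$.

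Finally, for the RQN direction \eqref{(2.26)}, the estimate is read off directly from Lemma \ref{Lem2}: $g_k^T d_k=-g_k^T P_k g_k\leq -\gamma_1\|g_k\|^2$. Taking $\bar{c}_1$ to be the minimum of $1,\;1-\bar{\xi}_3,\;2/(3\bar{\xi}_2),\;2/(3\bar{\xi}_2(1+M))$ and $\gamma_1$ finishes the proof. I expect the main obstacle to be the regularized subspace case, since a clean uniform upper bound on $\sigma_k\varpi^*$ requires tracking the adaptive update of $\sigma_k$ inherited from \cite{Zhao21}; the remaining four cases are essentially routine once the Schur-complement observation is in hand.
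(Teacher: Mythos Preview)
Your proposal is correct and follows essentially the same route as the paper: a case split between the SMCG directions and the RQN direction, with the RQN bound read off directly from Lemma~\ref{Lem2}. The paper is in fact terser on the SMCG side---it simply cites Lemma~4.1 of \cite{Zhao21} and records the resulting constant $c_1=\min\{1/2,\,1-\bar{\xi}_3,\,2/(3\bar{\xi}_2),\,1/(3\bar{\xi}_2),\,2/(5\bar{\xi}_2)\}$---so your explicit Schur-complement argument for the quadratic-model branch and your identification of the uniform bound on $\sigma_k\varpi^*$ as the only nontrivial import are precisely what that citation is hiding. (One minor indexing slip: in the RQN case $d_k=-P_{k-1}g_k$, not $-P_k g_k$, but the estimate of Lemma~\ref{Lem2} is uniform in $k$, so this is harmless.)
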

\begin{proof}
We divide the proof into the following two cases.

(i) SMCG iteration. Similar to the proof of Lemma 4.1 of \cite{Zhao21}, it is easy to have
\[
g_{k}^T{d_{k}} \leq  - { {c}_1}{\left\| {{g_{k}}} \right\|^2},
\]
where $ c_{1}=\min \left\{ {\frac{1}{2},1 - {\bar{\xi}_3},\frac{2}{{3{\bar{\xi}_2}}},\frac{1}{{3{\bar{\xi}_2}}},\frac{2}{{5{\bar{\xi}_2}}}} \right\} $ .

(ii) RQN iteration. According to Lemma \ref{Lem2}, we have
\[
g_{k}^T{d_{k}} = -g_{k}^T P_{k-1}g_{k} \leq -\gamma_1 \left\|g_{k} \right\|^2.
\]

By setting $\bar{c}_1 = \min \left\{ c_{1}, \gamma_1 \right\}$, we can obtain \eqref{(3.4)}. The proof is completed.
\qed
\end{proof}

\begin{lemma}\label{Lem4}
Suppose that Assumption \ref{Ass1} holds. Then, there exists a constant $c_1>0$ such that
the search directions  \eqref{(2.12)} and \eqref{(2.26)} are calculated by  algorithm  RL\_SMCG satisfy
\begin{equation}\label{(3.5)}
\left\| d_k \right\|\leq  \bar{c}_2 \|g_{k}\|.
\end{equation}
\end{lemma}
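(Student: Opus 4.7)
The plan is to partition the analysis into the five possibilities covered by \eqref{(2.12)} and \eqref{(2.26)}, bound $\|d_k\|$ above by a constant multiple of $\|g_k\|$ in each, and take $\bar{c}_2$ to be the maximum of the resulting constants. Two cases are immediate: in the RQN iteration, $\|d_k\| = \|P_{k-1} g_k\| \leq \|P_{k-1}\|\,\|g_k\| \leq \gamma_0\|g_k\|$ by Lemma \ref{Lem2}, and in case (iv) of \eqref{(2.12)} we trivially have $\|d_k\| = \|g_k\|$.

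For case (iii), using $s_{k-1} = \alpha_{k-1} d_{k-1}$ one can rewrite $\beta_k^{HS} d_{k-1} = (g_k^T y_{k-1}/s_{k-1}^T y_{k-1})\,s_{k-1}$, so that $d_{k-1}$ is eliminated in favor of $s_{k-1}$. Combining Cauchy--Schwarz, Assumption \ref{Ass1}(ii), and the lower bound $s_{k-1}^T y_{k-1} \geq \bar{\xi}_1\|s_{k-1}\|^2$ from \eqref{(2.10)} yields $\|\beta_k^{HS} d_{k-1}\| \leq (L/\bar{\xi}_1)\|g_k\|$, whence $\|d_k\| \leq (1 + L/\bar{\xi}_1)\|g_k\|$.

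The main work is cases (i) and (ii). Condition \eqref{(2.3)} combined with Cauchy--Schwarz and Assumption \ref{Ass1}(ii) delivers $\bar{\xi}_1\|s_{k-1}\| \leq \|y_{k-1}\| \leq \bar{\xi}_2\|s_{k-1}\|$ and $\bar{\xi}_1\|s_{k-1}\|^2 \leq s_{k-1}^T y_{k-1} \leq \bar{\xi}_2\|s_{k-1}\|^2$. Using the choice $\rho_k = \tfrac{3}{2}(\|y_{k-1}\|^2/s_{k-1}^T y_{k-1})\|g_k\|^2$, these give $\rho_k \leq \tfrac{3}{2}\bar{\xi}_2\|g_k\|^2$ and, crucially,
\begin{equation*}
\Delta_k = \tfrac{3}{2}\|y_{k-1}\|^2\|g_k\|^2 - (g_k^T y_{k-1})^2 \geq \tfrac{1}{2}\|y_{k-1}\|^2\|g_k\|^2 \geq \tfrac{1}{2}\bar{\xi}_1^2\|s_{k-1}\|^2\|g_k\|^2
\end{equation*}
via Cauchy--Schwarz applied to $(g_k^T y_{k-1})^2 \leq \|g_k\|^2\|y_{k-1}\|^2$. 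Substituting these estimates into the explicit formulas \eqref{(2.7)} for $(u_k,v_k)$ (using $1+\sigma_k\varpi^* \geq 1$) and into \eqref{(2.9)} for $(\bar{u}_k,\bar{v}_k)$, I obtain $|u_k|,|\bar u_k| \leq C_1$ and $|v_k|\,\|s_{k-1}\|,\ |\bar v_k|\,\|s_{k-1}\| \leq C_2\|g_k\|$ for constants depending only on $\bar{\xi}_1,\bar{\xi}_2,L$, hence $\|d_k\| \leq (C_1+C_2)\|g_k\|$. The main obstacle is securing the lower bound on $\Delta_k$, since without it $u_k,v_k$ could blow up and the whole estimate collapses; everything else reduces to routine norm manipulation. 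Setting $\bar{c}_2 = \max\{\gamma_0,\,1,\,1+L/\bar{\xi}_1,\,C_1+C_2\}$ then completes the argument.
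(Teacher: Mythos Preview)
Your proposal is correct and follows essentially the same approach as the paper: a case split into the RQN iteration (handled via Lemma \ref{Lem2}, exactly as in the paper) and the four SMCG subcases. The paper itself simply cites Lemma 4.2 of \cite{Zhao21} for the SMCG part and records the constant $c_2 = \max\{1,\,1+L/\bar{\xi}_1,\,20/\bar{\xi}_1\}$, whereas you spell out the computations---in particular the key lower bound $\Delta_k \geq \tfrac{1}{2}\|y_{k-1}\|^2\|g_k\|^2$ obtained from Cauchy--Schwarz and the choice of $\rho_k$---which is precisely the argument underlying that reference; the resulting constants differ cosmetically but the logic is identical.
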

\begin{proof}
We divide the proof into the following two cases.

(i) SMCG iteration. Referring to the proof procedure of Lemma 4.2 of \cite{Zhao21}, it is easy to get
\[
\left\| d_k \right\| \leq  {c}_2 \|g_{k}\|,
\]
where ${c_2} = \max  \left\{ {1,1 + \frac{L}{{{\bar{\xi}_1}}},\frac{{20}}{{{\bar{\xi}_1}}}} \right\} $.

(ii) RQN iteration. According to Lemma \ref{Lem2}, we obtain $\left\| d_k \right\| = \left\|- P_{k-1}g_{k}\right\| \leq \gamma_0 \left\|g_{k} \right\|$.

By setting $\bar{c}_2 = \min \left\{ c_{2}, \gamma_0 \right\}$, we can obtain \eqref{(3.5)}. The proof is completed.
\qed
\end{proof}

The following lemmas are very critical for the convergence analysis of algorithm  RL\_SMCG.

\begin{lemma}\label{Lem5}
Suppose that  Assumption \ref{Ass1} holds, and the sequence $\{ {x_k}\} $ is generated by the algorithm  RL\_SMCG.
Then,

If acceleration succeeds:
	\begin{equation}\label{(3.6)}
{\bar{\eta}_{k}\alpha _k} \ge \left( {\frac{{ 1 - \sigma }}{L}} \right)\frac{ \left| {g_k^T{d_k}} \right| }{{{{\left\| {{d_k}} \right\|}^2}}}.
	\end{equation}

If acceleration fails:
	\begin{equation}\label{(3.7)}
{\alpha _k} \ge \left( {\frac{{ 1 - \sigma }}{L}} \right)\frac{ \left| {g_k^T{d_k}} \right| }{{{{\left\| {{d_k}} \right\|}^2}}}.
	\end{equation}

Where $\sigma$ are given by  \eqref{(2.43)}.
\end{lemma}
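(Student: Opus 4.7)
The plan is to derive both bounds from the curvature (second Wolfe) condition \eqref{(2.43)} combined with Lipschitz continuity of $g$ supplied by Assumption \ref{Ass1}(ii), treating the two cases (acceleration succeeds / acceleration fails) in parallel.

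First I would fix which gradient $g_{k+1}$ the condition \eqref{(2.43)} refers to in each case. If the acceleration in Step~6 is accepted, then by Step~6.2 we have $x_{k+1} = x_k + \bar{\eta}_k\alpha_k d_k$, and by Step~6.3 the curvature condition \eqref{(2.43)} is verified with $g_{k+1}=\nabla f(x_k+\bar{\eta}_k\alpha_k d_k)$; if acceleration fails, Step~7 yields $x_{k+1} = x_k + \alpha_k d_k$, and the curvature condition holds with $g_{k+1}=\nabla f(x_k+\alpha_k d_k)$ because $\alpha_k$ is chosen in Step~4 to satisfy \eqref{(2.43)} already.

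Next I would rewrite \eqref{(2.43)} as $(g_{k+1}-g_k)^T d_k \geq (\sigma-1)g_k^T d_k$. Lemma~\ref{Lem3} gives $g_k^T d_k < 0$, so $(\sigma-1)g_k^T d_k = (1-\sigma)\lvert g_k^T d_k\rvert$, and hence
\begin{equation*}
(g_{k+1}-g_k)^T d_k \;\geq\; (1-\sigma)\,\lvert g_k^T d_k\rvert.
\end{equation*}
On the other hand, by Cauchy--Schwarz and Assumption~\ref{Ass1}(ii),
\begin{equation*}
(g_{k+1}-g_k)^T d_k \;\leq\; \|g_{k+1}-g_k\|\,\|d_k\| \;\leq\; L\,\|x_{k+1}-x_k\|\,\|d_k\|.
\end{equation*}
In the accelerated case $\|x_{k+1}-x_k\|=\bar{\eta}_k\alpha_k\|d_k\|$, giving $L\bar{\eta}_k\alpha_k\|d_k\|^2 \geq (1-\sigma)|g_k^T d_k|$, which rearranges to \eqref{(3.6)}; in the non-accelerated case $\|x_{k+1}-x_k\|=\alpha_k\|d_k\|$, which yields \eqref{(3.7)} in exactly the same way.

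The argument is essentially routine; the only place that requires care is the bookkeeping of \emph{which} iterate the curvature condition is evaluated at in each branch of Step~6, since the acceleration procedure modifies both the trial stepsize and the point at which \eqref{(2.43)} is checked. Once that is pinned down, the inequality chain above gives both conclusions immediately, with no further appeal to the structure of $d_k$ beyond $g_k^T d_k<0$.
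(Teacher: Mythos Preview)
Your proof is correct and follows essentially the same route as the paper: rewrite the curvature condition \eqref{(2.43)} as $(g_{k+1}-g_k)^T d_k \geq (\sigma-1)g_k^T d_k$, bound the left side by $L\|x_{k+1}-x_k\|\,\|d_k\|$ via Lipschitz continuity, and use $g_k^T d_k<0$ to convert $(\sigma-1)g_k^T d_k$ into $(1-\sigma)|g_k^T d_k|$. Your explicit bookkeeping about which iterate $g_{k+1}$ refers to in each branch of Step~6 is a useful clarification that the paper leaves implicit.
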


\begin{proof}

We divide the proof into the following two cases.

\textbf{(i)} If acceleration succeeds:

From \eqref{(2.43)} and Assumptions \ref{Ass1} (ii), we obtain that
$$
(\sigma -1)g^{T}_{k}d_{k} \leq g(x_k + \bar{\eta}_k \alpha_k d_k)^{T}d_k - g^{T}_{k}d_{k} = (g(x_k + \bar{\eta}_k \alpha_k d_k) - g_{k})^{T}d_{k} \leq L\bar{\eta}_k \alpha_k \|d_k\|^{2},
$$
which yields
$$
{\bar{\eta}_k\alpha _k} \ge \left( {\frac{{\sigma - 1 }}{L}} \right)\frac{ {g_k^T{d_k}} }{{{{\left\| {{d_k}} \right\|}^2}}}.
$$
This means that \eqref{(3.6)} holds.

\textbf{(ii)} If acceleration fails:

 Let $\bar{\eta}_{k}=1,$ and the rest of the proof procedure is the same as before.
\qed
\end{proof}

\begin{lemma}\label{Lem6}
Suppose that Assumption \ref{Ass1} holds, and the sequence $\{ {x_k}\} $ is generated by the algorithm RL\_SMCG. Then, there holds that ${f_k} \le {C_k}$ for each $k$.
\end{lemma}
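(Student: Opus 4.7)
The plan is to prove the inequality by induction on $k$. The base case $k=0$ is immediate since the algorithm initializes $C_0 = f_0$. For the inductive step with $k \geq 1$, the key algebraic observation is that the update rule \eqref{(2.44)} makes $C_{k+1}$ a convex combination of $C_k$ and $f_{k+1}$; a direct rearrangement gives
\begin{equation*}
C_{k+1} - f_{k+1} \;=\; \frac{\eta_k Q_k\bigl(C_k - f_{k+1}\bigr)}{Q_{k+1}}.
\end{equation*}
Since $\eta_k \in [0,1]$ and $Q_k, Q_{k+1} > 0$, the sign of $C_{k+1} - f_{k+1}$ matches that of $C_k - f_{k+1}$, so it is enough to establish the one-sided bound $f_{k+1} \leq C_k$.

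To obtain this one-sided bound, I would substitute the update formula \eqref{(2.44)} into the line search condition \eqref{(2.42)} and use the identity $Q_{k+1} - \eta_k Q_k = 1$ to arrive at the equivalent form already displayed in the paper as \eqref{(2.48)}:
\begin{equation*}
f(x_k + \bar{\eta}_k \alpha_k d_k) \;\leq\; C_k + Q_{k+1}\,\delta_k\,\bar{\eta}_k\,\alpha_k\, g_k^{T}d_k.
\end{equation*}
By Lemma \ref{Lem3} we have the sufficient descent $g_k^{T}d_k \leq -\bar{c}_1\|g_k\|^2 \leq 0$, and $Q_{k+1},\delta_k,\bar{\eta}_k,\alpha_k$ are all positive (with $\bar{\eta}_k = 1$ when acceleration fails, and $\bar{\eta}_k = -\bar{a}_k/\bar{b}_k > 0$ when it succeeds, in view of \eqref{(2.31)}--\eqref{(2.32)}). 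Therefore the residual term is nonpositive and we conclude $f_{k+1} \leq C_k$, which combined with the identity above yields $f_{k+1} \leq C_{k+1}$.

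The only corner case is $k=1$, since $C_1$ is defined by the special initialization \eqref{(2.45)} rather than by \eqref{(2.44)}. Here I would argue as follows: the line search condition \eqref{(2.42)} applied at $k=0$ gives $C_1 \leq C_0 + \delta_0\bar{\eta}_0\alpha_0 g_0^{T}d_0 < C_0$ (strictly, because $g_0^{T}d_0 < 0$ by Lemma \ref{Lem3}). Since $C_1 = \min\{C_0, f_1+1\}$, the strict inequality $C_1 < C_0$ forces the minimum to be attained at $f_1 + 1$, so $C_1 = f_1 + 1 > f_1$. The main (and essentially only) obstacle is keeping track of this $k=1$ initialization separately from the generic update rule; once that is isolated, the remainder of the argument is the clean algebraic identity above, applied to every subsequent step.
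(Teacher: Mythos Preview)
Your approach is essentially the same as the paper's: both obtain $f_{k+1}\le C_k$ from the equivalent form \eqref{(2.48)} of the line search together with sufficient descent (Lemma~\ref{Lem3}), and then pass to $f_{k+1}\le C_{k+1}$ via the convex-combination structure of \eqref{(2.44)}. Your write-up is in fact more explicit than the paper's, which outsources that second step to Lemma~5.1 of \cite{Zhao21}; your identity $C_{k+1}-f_{k+1}=\eta_kQ_k(C_k-f_{k+1})/Q_{k+1}$ makes it transparent, and you also treat the special initialization \eqref{(2.45)}, which the paper's own proof does not.

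One small imprecision in your $k=1$ step: you apply \eqref{(2.42)} at $k=0$ to conclude $C_1<C_0$ for the $C_1$ of \eqref{(2.45)}, but the paper's derivation of \eqref{(2.48)} from \eqref{(2.42)} goes through the update \eqref{(2.44)}, so it is not clear that the line search actually enforces \eqref{(2.42)} with the \eqref{(2.45)}-definition of $C_1$. A safer argument avoids this ambiguity: use \eqref{(2.48)} at $k=0$ to get $f_1<C_0$, and then note that $C_1=\min\{C_0,f_1+1\}$ satisfies $f_1<C_1$ in either branch of the minimum (if $C_1=C_0$ then $f_1<C_0=C_1$; if $C_1=f_1+1$ then trivially $f_1<C_1$). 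With that tweak your proof is complete.
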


\begin{proof}
We divide the proof into the following two cases.

\textbf{(i)} If acceleration succeeds:

The new iterative update format is $x_{k+1}=x_{k}+\bar{\eta}_k \alpha_k d_k$, where $\bar{\eta}_{k} = - \frac{\bar{a}_{k}}{\bar{b}_{k}}$. Through  \eqref{(2.48)}, we have $f_{k+1}=f(x_{k}+\bar{\eta}_k \alpha_k d_k) \leq C_k + Q_{k+1}\delta_{k} \bar{\eta}_k \alpha_{k}g_k^{ T}{d_k}$. Combining \eqref{(2.44)}, $\delta_{k}>0$, lemma \ref{Lem5} and the sufficiently descent property of the direction $d_{k+1}$, we have $f_{k+1}<C_k$. The remaining proof process refers to Lemma 5.1 in \cite{Zhao21}, we can obtain $f_{k+1}\leq C_{k+1}$, hence ${f_k} \le {C_k}$ is established for each $k$.

\textbf{(ii)} If acceleration fails:

Let $\bar{\eta}_{k}=1,$ and the rest of the proof procedure is the same as before.
\qed
\end{proof}

\begin{theorem}\label{Th1}
Suppose that Assumption \ref{Ass1} holds, the  sequence $\{ {x_k}\} $ is generated by the algorithm RL\_SMCG. Then,
	\begin{equation}\label{(4.1)}
	\mathop {\lim}\limits_{k \to \infty } \left\| {{g_k}} \right\| = 0.
	\end{equation}
\end{theorem}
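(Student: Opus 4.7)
The plan is to prove Theorem \ref{Th1} by a standard Zoutendijk-type argument adapted to the generalized nonmonotone Wolfe line search \eqref{(2.42)}--\eqref{(2.43)}. For notational uniformity, we can treat the failed-acceleration case as $\bar{\eta}_k = 1$ so that in both branches the accepted iterate $x_{k+1} = x_k + \bar{\eta}_k \alpha_k d_k$ satisfies \eqref{(2.42)} and \eqref{(2.43)}.

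First I would show that $\{C_k\}$ is non-increasing and bounded below. Sufficient descent (Lemma \ref{Lem3}) gives $g_k^T d_k < 0$, so \eqref{(2.42)} implies $C_{k+1} \le C_k$. Moreover, unrolling the update \eqref{(2.44)} shows that $C_k$ is a convex combination of $f_0, f_1, \ldots, f_k$, so by Assumption \ref{Ass1}(i) the sequence is bounded below by $\inf_{x \in L(x_0)} f(x) > -\infty$. Hence $\lim_k C_k$ exists and is finite.

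Next, summing the line-search inequality \eqref{(2.42)} and using $\delta_k \ge \delta_{\min} > 0$ yields
\begin{equation*}
\delta_{\min}\sum_{k=0}^{\infty} \bar{\eta}_k \alpha_k \bigl(-g_k^T d_k\bigr) \le C_0 - \lim_{k\to\infty} C_k < \infty.
\end{equation*}
Lemma \ref{Lem5} (which handles both the success and failure branches uniformly) provides the stepsize lower bound $\bar{\eta}_k \alpha_k \ge \tfrac{1-\sigma}{L}\, |g_k^T d_k|/\|d_k\|^2$, so substituting into the summability gives a Zoutendijk-type condition
\begin{equation*}
\sum_{k=0}^{\infty} \frac{(g_k^T d_k)^2}{\|d_k\|^2} < \infty.
\end{equation*}

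Finally, combining sufficient descent (Lemma \ref{Lem3}) with the direction bound (Lemma \ref{Lem4}) we have
\begin{equation*}
\frac{(g_k^T d_k)^2}{\|d_k\|^2} \ge \frac{\bar{c}_1^{\,2}\|g_k\|^4}{\bar{c}_2^{\,2}\|g_k\|^2} = \frac{\bar{c}_1^{\,2}}{\bar{c}_2^{\,2}}\,\|g_k\|^2,
\end{equation*}
so that $\sum_k \|g_k\|^2 < \infty$, which forces $\|g_k\| \to 0$ and yields \eqref{(4.1)}. The main obstacle I anticipate is handling the hybrid structure cleanly: both types of iterations (SMCG and RQN) and both acceleration branches must simultaneously satisfy sufficient descent, the linear growth bound, and the stepsize lower bound. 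These are exactly what Lemmas \ref{Lem3}--\ref{Lem5} are designed to deliver uniformly, so once the boundedness of $\{C_k\}$ is established the remaining computation is essentially a two-line application of Cauchy--Schwarz-style bounds.
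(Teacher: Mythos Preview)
Your proposal is correct and follows essentially the same route as the paper: both combine the monotonicity and lower-boundedness of $\{C_k\}$ with the line-search inequality \eqref{(2.42)}, the stepsize lower bound from Lemma~\ref{Lem5}, and the uniform bounds from Lemmas~\ref{Lem3}--\ref{Lem4} to obtain $\sum_k \|g_k\|^2 < \infty$. The only minor point is that your convex-combination argument for bounding $C_k$ below tacitly requires $x_k \in L(x_0)$, which the paper secures via Lemma~\ref{Lem6} ($f_k \le C_k \le C_0 = f_0$); you should invoke that lemma rather than the unrolling, since the special initialization \eqref{(2.45)} makes the literal convex-combination claim slightly inexact.
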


\begin{proof}
We divide the proof into the following two cases.

\textbf{(i)} If acceleration succeeds:

By  Assumptions \ref{Ass1}, lemmas \ref{Lem3} - \ref{Lem5} and  the generalized nonmonotone Wolfe line search conditions \eqref{(2.42)} and \eqref{(2.43)}, we get that
\begin{align}\label{(3.9)}
C_{k+1} & \leq C_k + \delta_{k} \bar{\eta}_k \alpha_{k}g_k^{T}{d_k} \\\nonumber
        & \leq C_k + \delta_{\min} \bar{\eta}_k \alpha_{k}g_k^{T}{d_k} \\\nonumber
        & \leq C_k + \delta_{\min} \frac{1-\sigma}{L}\frac{(g^T_k d_k)^2}{\|d_k\|^2} \\\nonumber
        & \leq C_k + \frac{\delta_{\min}(1-\sigma)\bar{c}_1^2}{L\bar{c}_2^2} \|g_k\|^2 \\\nonumber
        & = C_k + \beta\|g_k\|^2.
\end{align}
Where $\beta=\frac{\delta_{\min}(1-\sigma)\bar{c}_1^2}{L\bar{c}_2^2}.$ Combined with \eqref{(2.45)}, we have $C_1\leq C_0$ that means that $C_k$ is monotonically decreasing. According to  lemma \ref{Lem6}  and Assumption \ref{Ass1} (i), we know $C_k$ is bounded from below. Then
\begin{align*}
\sum\limits_{k = 0}^\infty  {{\beta }{{\left\| {{g_k}} \right\|}^2} < \infty } ,
\end{align*}
therefore,
\begin{align*}
\mathop {\lim }\limits_{k \to \infty } \left\| {g({x_k})} \right\| = 0.
\end{align*}

\textbf{(ii)} If acceleration fails:

Let $\bar{\eta}_{k}=1,$ and the rest of the proof procedure is the same as before.
\qed
\end{proof}

\section{Numerical Experiments}\label{sec4}
In this section, we compare the numerical performance of RL\_SMCG with ASMCG\_PR \cite{Sun21}, CG\_DESCENT(6.8) \cite{Hager2013} and CGOPT(2.0) \cite{Liu20} for the 145 test problems from  CUTEr library \cite{Gould03}. The codes of CG\_DESCENT(6.8) \cite{Hager2013} and CGOPT(2.0) \cite{Liu20} can be downloaded from \textcolor{blue}{http://users.clas.ufl.edu/hager/papers/Software} and \textcolor{blue}{https://web.xidian.edu.cn/xdliuhongwei/en/paper.html} or \textcolor{blue}{ http://lsec.cc.ac.cn/~dyh/software.html}, respectively.

In the numerical experiments, we set the parameters of RL\_SMCG as:
$\bar{\xi}_1 =10^{-10},$ $\bar{\xi}_2 =1.2 \times 10^{4},$ $\bar{\xi}_3 =5\times 10^{-5},$ $\bar{\xi}_4 =10^{-4},$ $\bar{\xi}_5 =0.08,$ $\tilde{\eta}_0 = 10^{-9},$ $\tilde{\eta}_1 =0.5,$ $\upsilon=5 \times 10^{-7},$ $m=\min\{n,11\},$ $\sigma_1 = 0.1,$ $\sigma_2 = 5,$ $\sigma_3 = 0.85,$ $\hat{\tau} =1,$ $\bar{\tau} =0.225,$ $\bar{c}=0.1,$ $\varsigma=5\times 10^{-5}(n\leq11),$ $\varsigma=5\times 10^{-6}(n>11),$ $\bar{\varsigma}=5\times 10^{-3},$ $\tau_1 =0.1,$ $\tau_2 =135,$ $\delta_k =0.0005$ and $\sigma=0.9999.$
CG\_DESCENT(6.8) and CGOPT(2.0) take the default parameters in their codes but the stopping conditions. Note that the number of memory $m$ for RL\_SMCG is $\min\{n,11\}$ while the number of memory for CG\_DESCENT(6.8) is 11.
All test methods in the experiment are terminated if ${\left\| {{g_k}} \right\|_\infty } \le {10^{ - 6}}$ is satisfied, and we set the number of iterations for all test algorithms to be no more than 200,000. In addition, all algorithms are running in Ubuntu 10.04 LTS.

We will show the performances of the test methods using the performance profiles introduced  by Dolan and Mor\'e \cite{Dolan02}. In the following Figs. \ref{fig.1}-\ref{fig.12}, ``${N_{iter}}$",``${N_f}$",``${N_g}$" and ``${T_{cpu}}$" represent the number of iterations, the number of function evaluations, the number of gradient evaluations and CPU time(s), respectively.

We divided the numerical experiments in three teams.
\begin{figure}[htp]
	\centering
	\begin{minipage}[t]{0.46 \linewidth}
       \centering
		\includegraphics[scale=0.5]{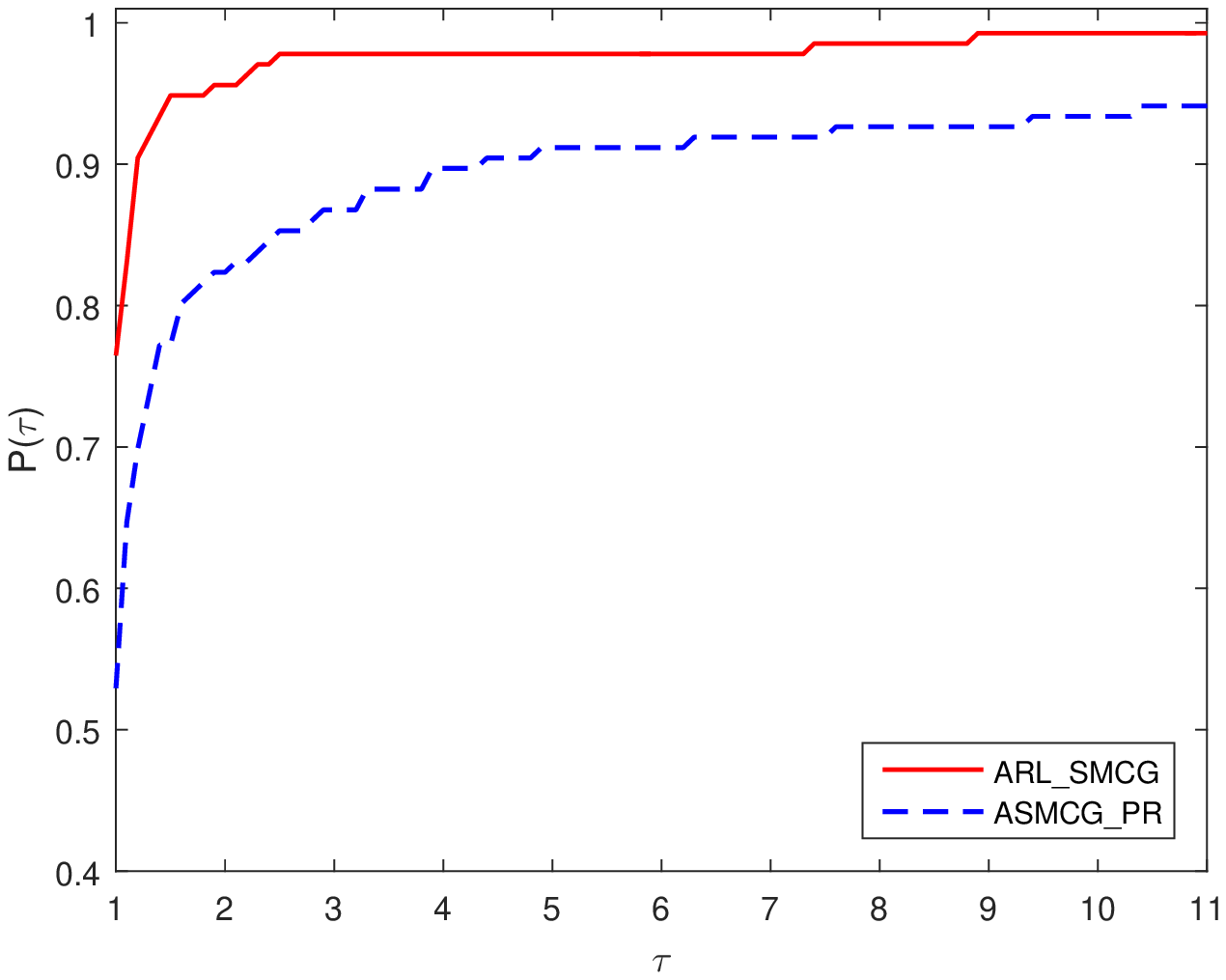}
\caption{${N_{iter}}$}\label{fig.1}
	\end{minipage}	
	\begin{minipage}[t]{0.46 \linewidth}
		\centering
		\includegraphics[scale=0.5]{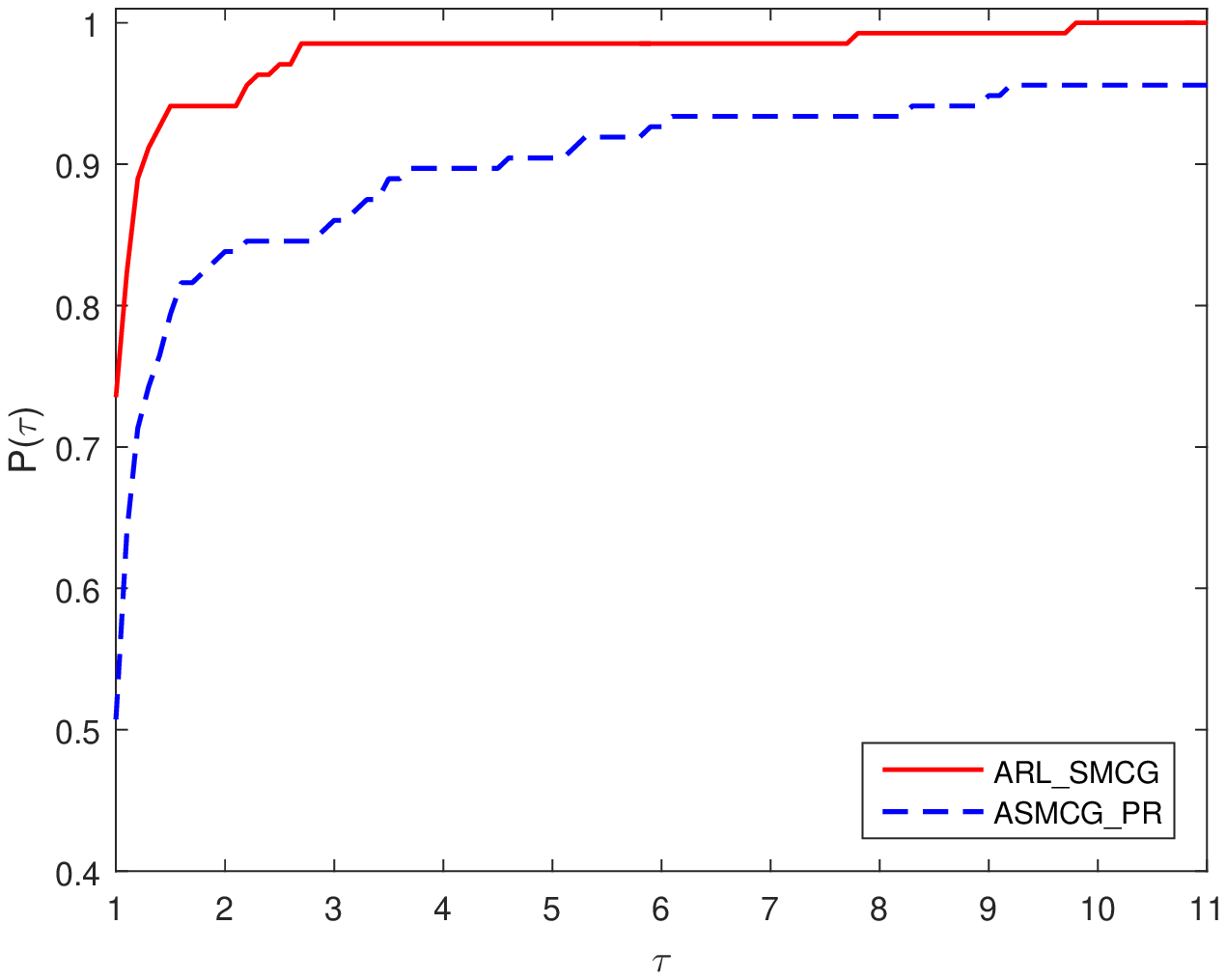}
\caption{ ${N_{f}}$ }\label{fig.2}
	\end{minipage}	
\end{figure}

\begin{figure}[htp]
	\centering
	\begin{minipage}[t]{0.46 \linewidth}
		\centering
		\includegraphics[scale=0.5]{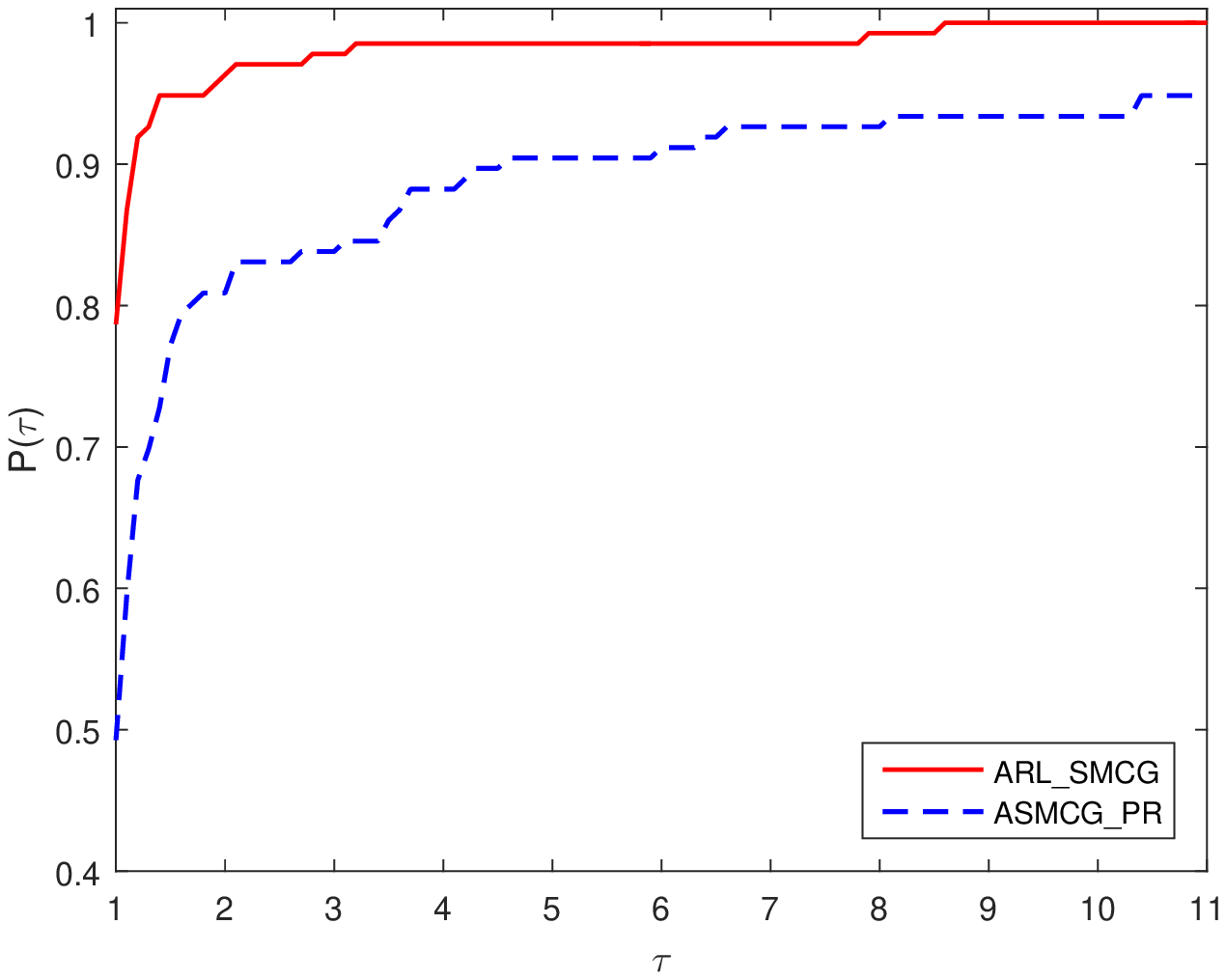}
\caption{ ${N_{g}}$ }\label{fig.3}
	\end{minipage}	
	\begin{minipage}[t]{0.46 \linewidth}
		\centering
		\includegraphics[scale=0.5]{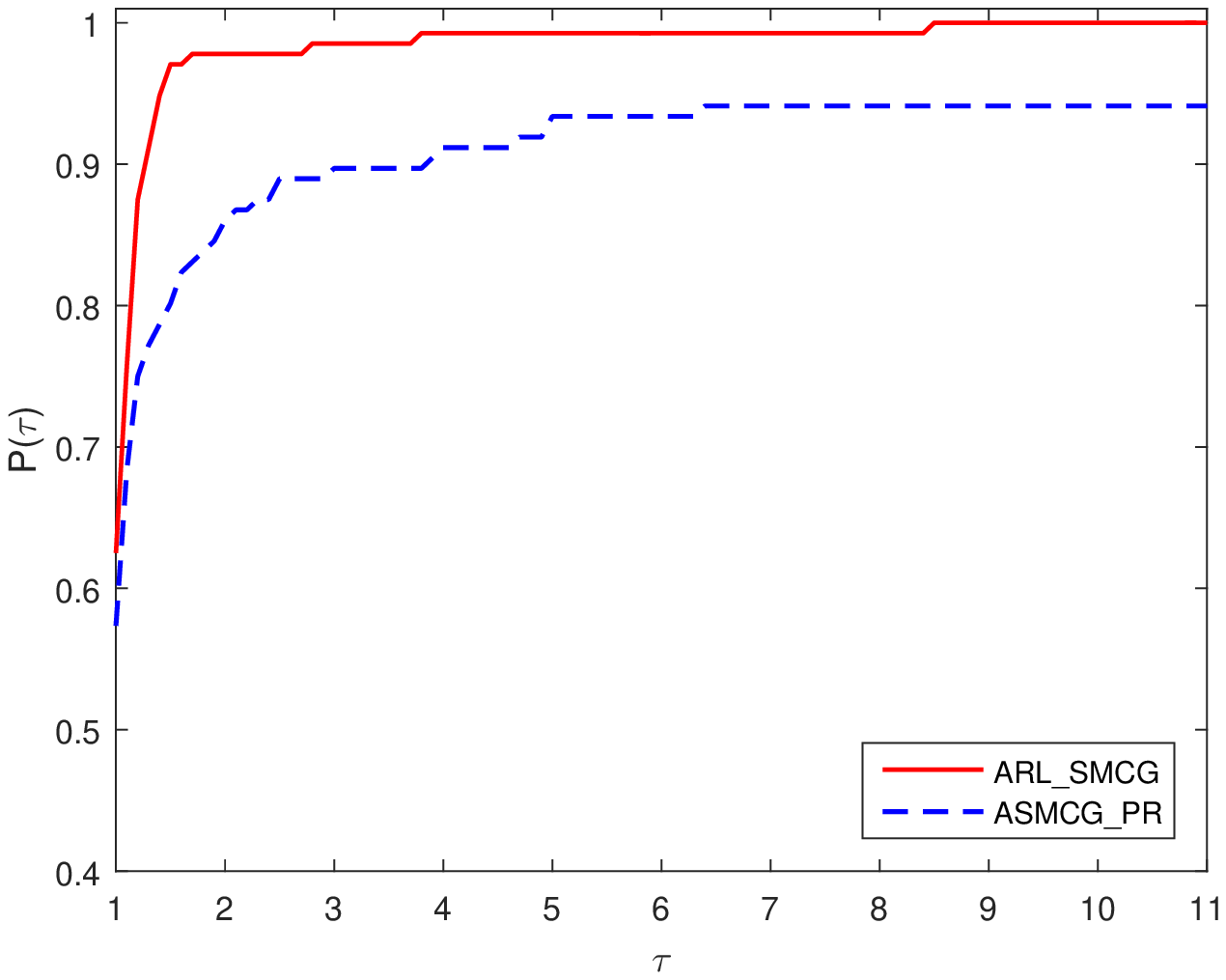}
\caption{ ${T_{cpu}}$ }\label{fig.4}
	\end{minipage}	
\end{figure}

In the first set of numerical experiments, figures  \ref{fig.1}-\ref{fig.4} illustrate the performance profiles of RL\_SMCG and ASMCG\_PR \cite{Sun21}.  From Figs. \ref{fig.1}, \ref{fig.2}, \ref{fig.3} and \ref{fig.4}, we can observe that RL\_SMCG has a quite significant improvement over  ASMCG\_PR in terms of the number of iterations, the number of function evaluations, the number of gradient evaluations and CPU time. It indicates that the limited memory technique equipped in RL\_SMCG indeed brings quite significant numerical improvements.

In the second set of numerical experiments, we give a comparison of the performance profiles of RL\_SMCG with CG\_DESCENT(6.8) \cite{Hager2013}. Regarding the number of iterations and  the number of function evaluations in Fig. \ref{fig.5} and Fig. \ref{fig.6} respectively, we observe that RL\_SMCG is a little  better than CG\_DESCENT(6.8) for the number of iterations and the number of function evaluations.  As shown in Fig. \ref{fig.7}, we can see that RL\_SMCG is much better than CG\_DESCENT(6.8) in terms of the number of gradient evaluations,  because RL\_SMCG outperforms for about $71.5\% $ of the CUTEr test problems, while the percentage of software CG\_DESCENT(6.8) is below $40\%.$  It can be observe from Fig. \ref{fig.8} that RL\_SMCG is faster than CG\_DESCENT(6.8) in terms of CPU time.
By Theorem \ref{Th1}, RL\_SMCG is globally convergent with the generalized nonmonotone Wolfe line search, while CG\_DESCENT (6.8) does not guarantee global convergence when using the rather efficient approximate Wolfe (AWolfe) line search. This means that RL\_SMCG is superior to CG\_DESCENT(6.8) for CUTEr library in theory and numerical performance.

\begin{figure}[htp]
	\centering
	\begin{minipage}[t]{0.46 \linewidth}
		\includegraphics[scale=0.5]{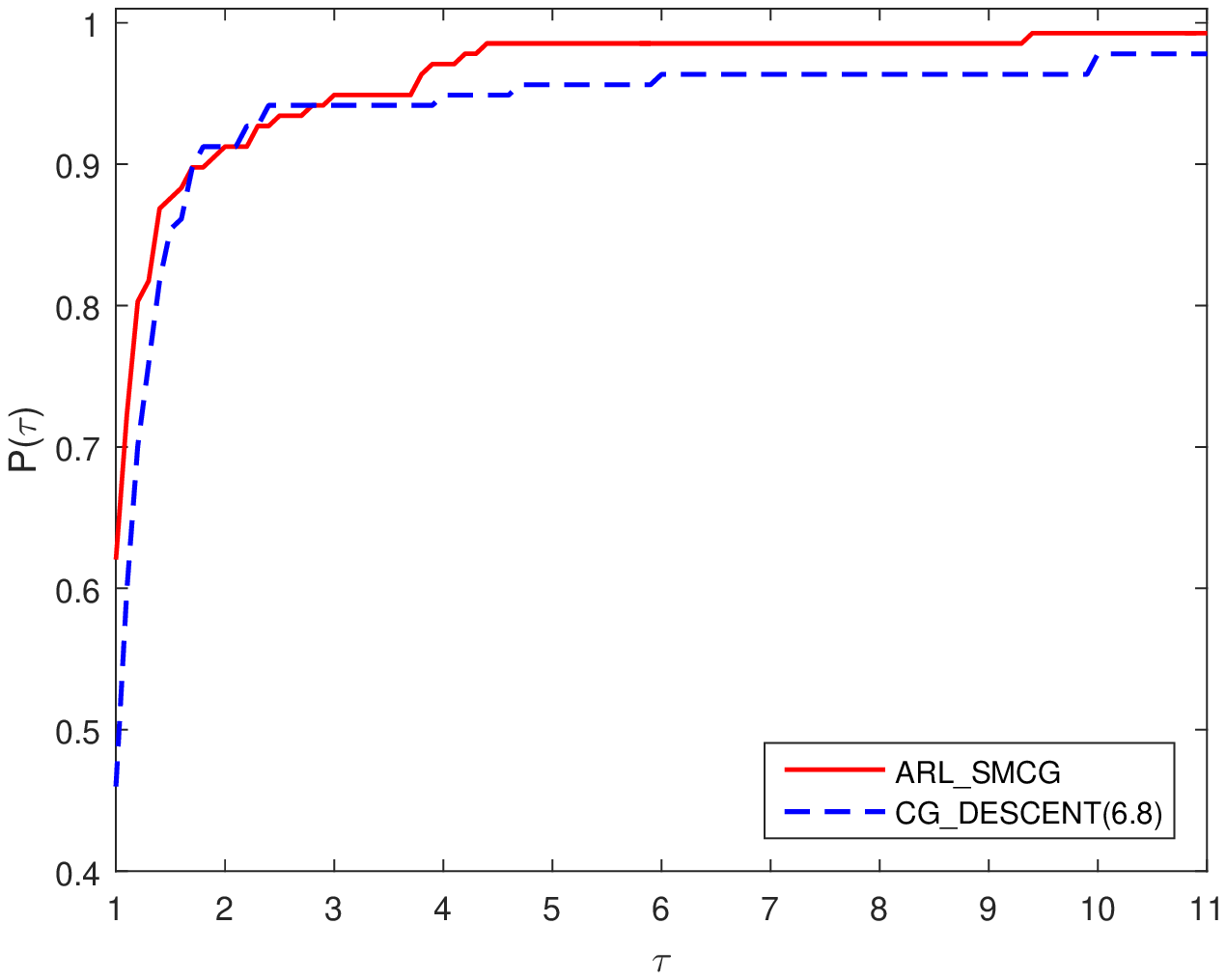}
\caption{ ${N_{iter}}$ }\label{fig.5}
	\end{minipage}	
	\begin{minipage}[t]{0.46 \linewidth}
		\centering
		\includegraphics[scale=0.5]{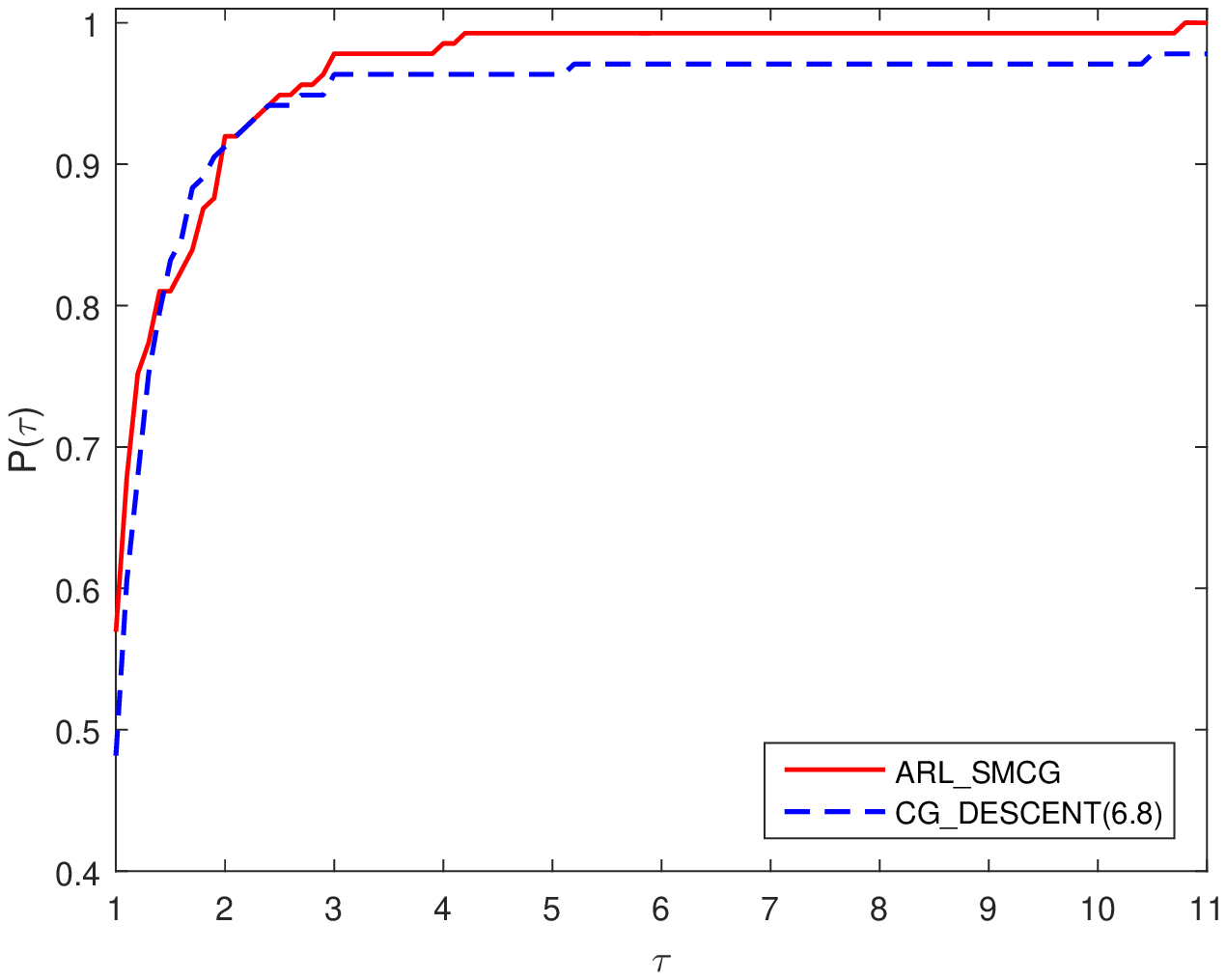}
\caption{${N_{f}}$}\label{fig.6}
	\end{minipage}	
\end{figure}

\begin{figure}[htp]
	\centering
	\begin{minipage}[t]{0.46 \linewidth}
		\centering
		\includegraphics[scale=0.5]{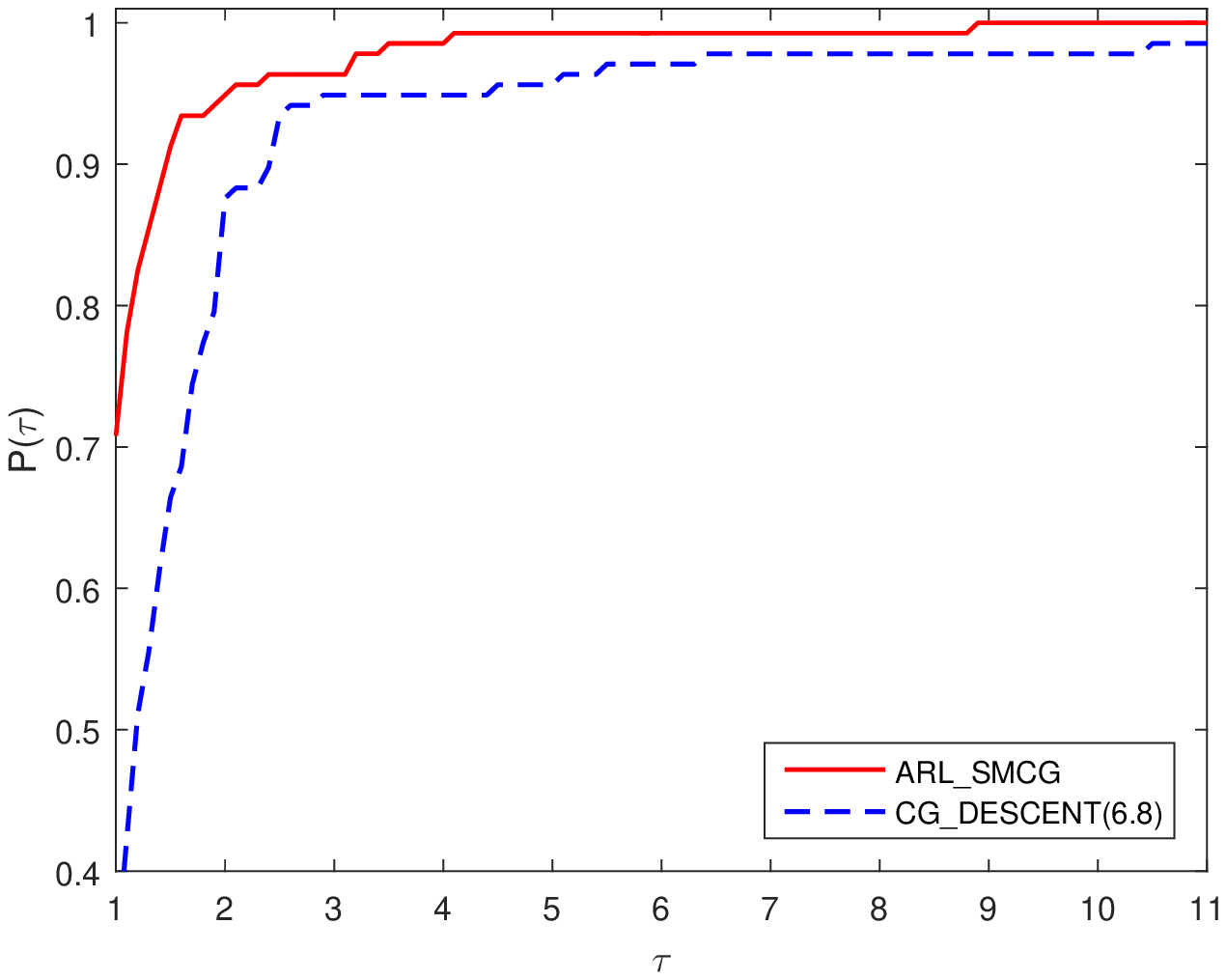}
\caption{ ${N_{g}}$ }\label{fig.7}
	\end{minipage}	
	\begin{minipage}[t]{0.46 \linewidth}
		\centering
		\includegraphics[scale=0.5]{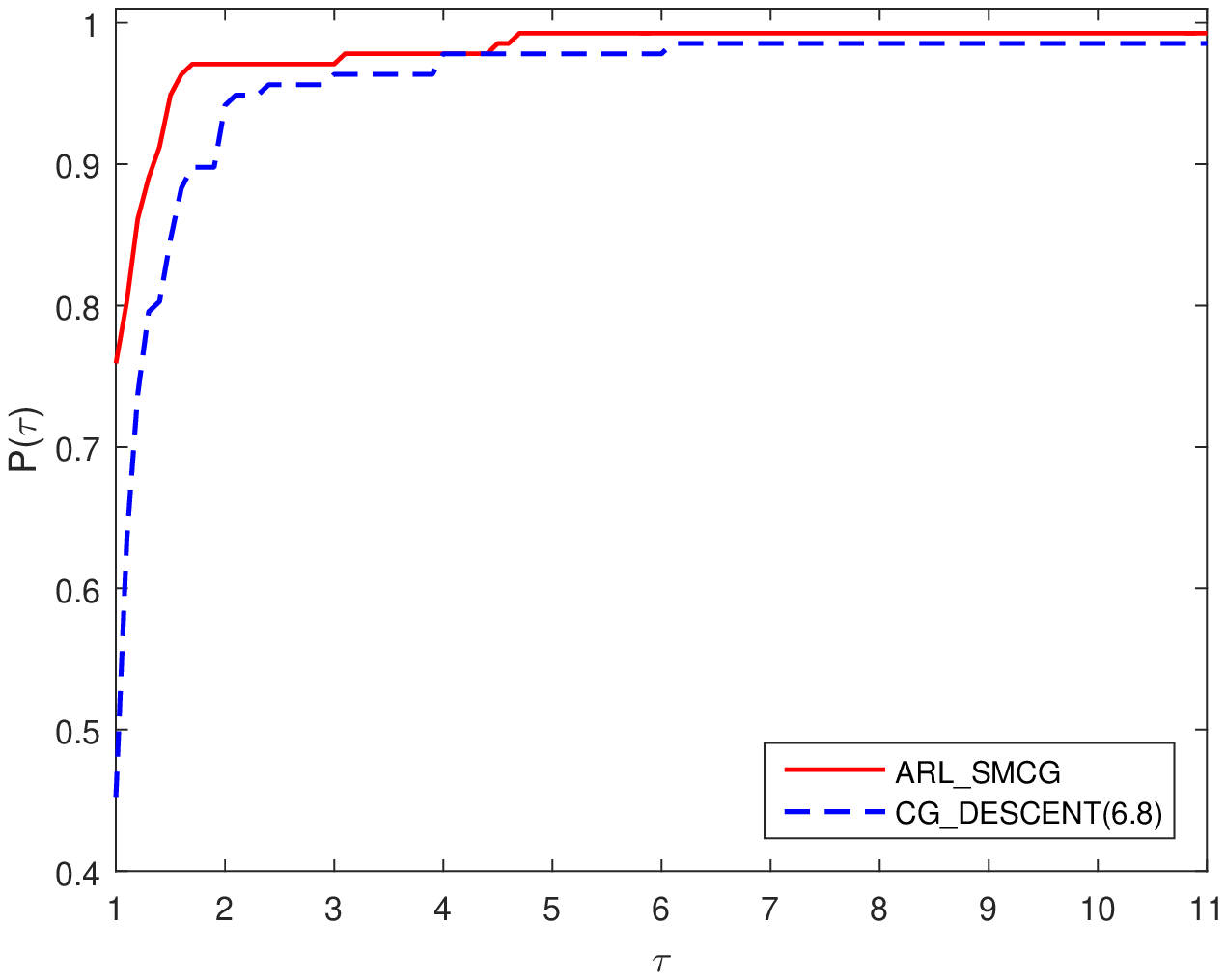}
\caption{ ${T_{cpu}}$ }\label{fig.8}
	\end{minipage}	
\end{figure}

In the third set of the numerical experiments, comparing the performance of RL\_SMCG with CGOPT(2.0) \cite{Liu20}.  As shown in Figs. \ref{fig.9} and \ref{fig.10}, we can take a look at RL\_SMCG performs almost always  better than CGOPT(2.0)  in terms of the number of iterations and the number of function evaluations.  Figures. \ref{fig.11} and \ref{fig.12}  indicates that RL\_SMCG outperforms CGOPT(2.0) in terms of the number of gradient evaluations and CPU time  for the CUTEr library.

From the results of the above three numerical experiments, it is clear that the proposed algorithm RL\_SMCG is quite effective.

\begin{figure}[htp]
	\centering
	\begin{minipage}[t]{0.46 \linewidth}
		\includegraphics[scale=0.5]{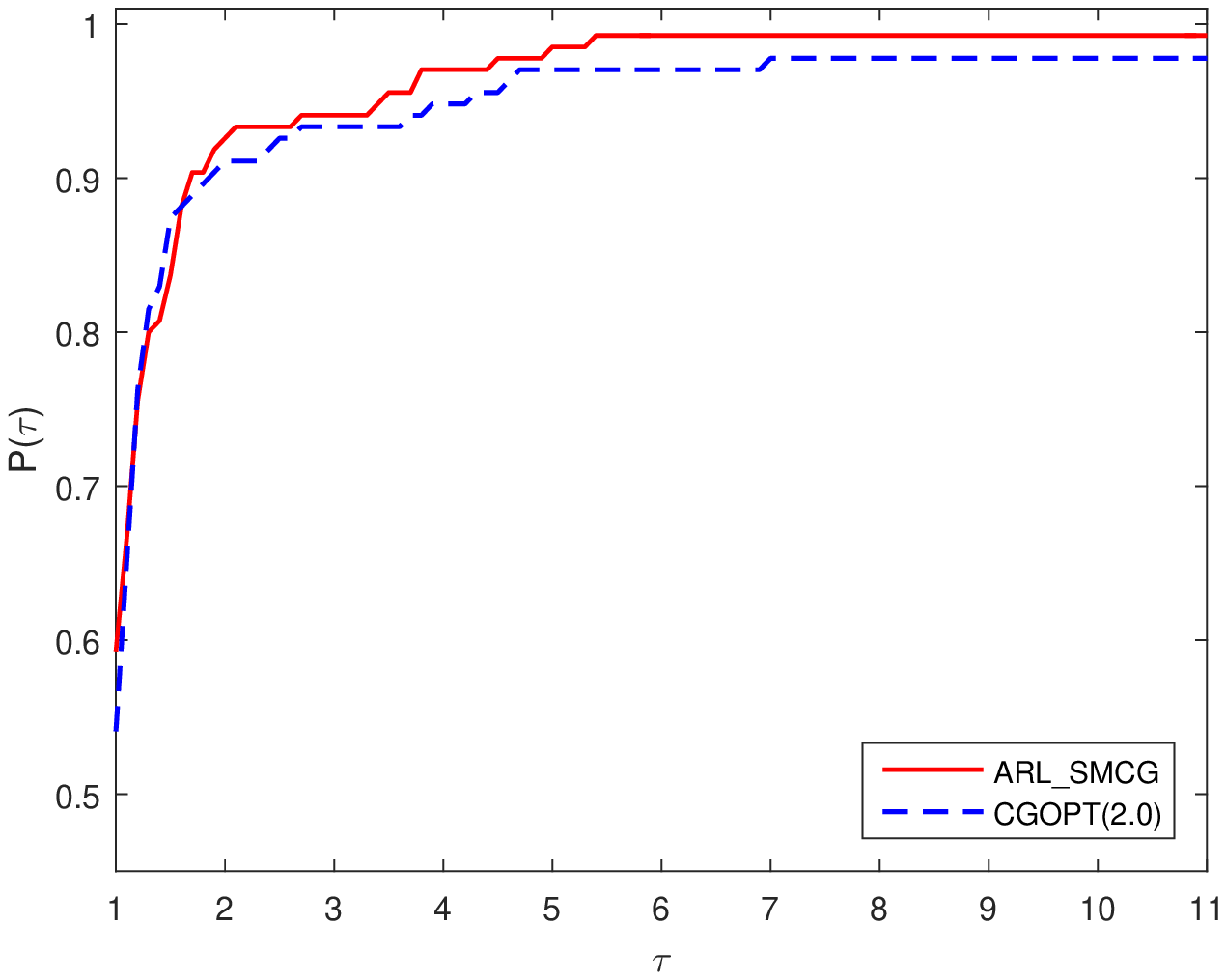}
\caption{ ${N_{iter}}$ }\label{fig.9}
	\end{minipage}	
	\begin{minipage}[t]{0.46 \linewidth}
		\centering
		\includegraphics[scale=0.5]{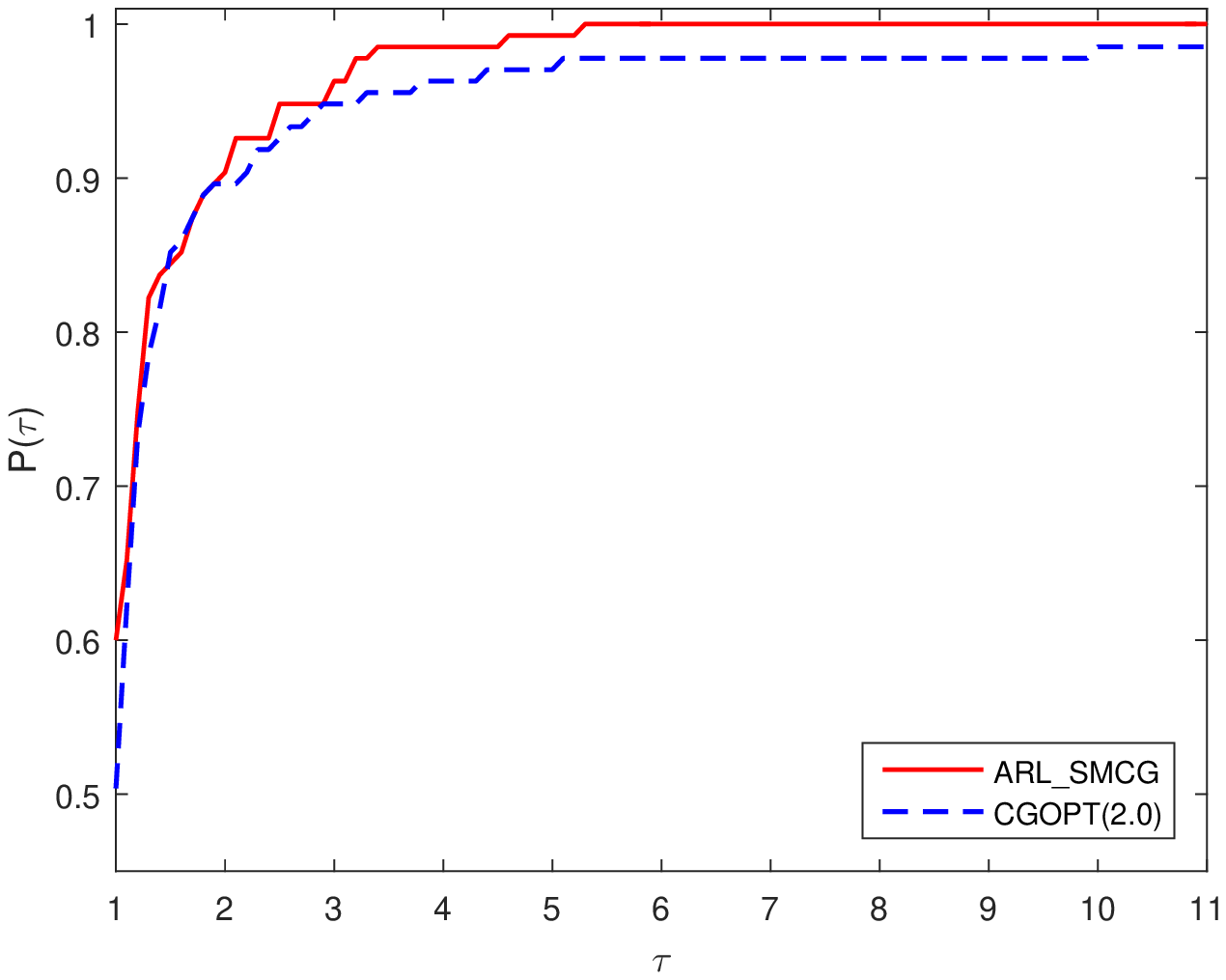}
\caption{ ${N_{f}}$ }\label{fig.10}
	\end{minipage}	
\end{figure}

\begin{figure}[htp]
	\centering
	\begin{minipage}[t]{0.46 \linewidth}
		\centering
		\includegraphics[scale=0.5]{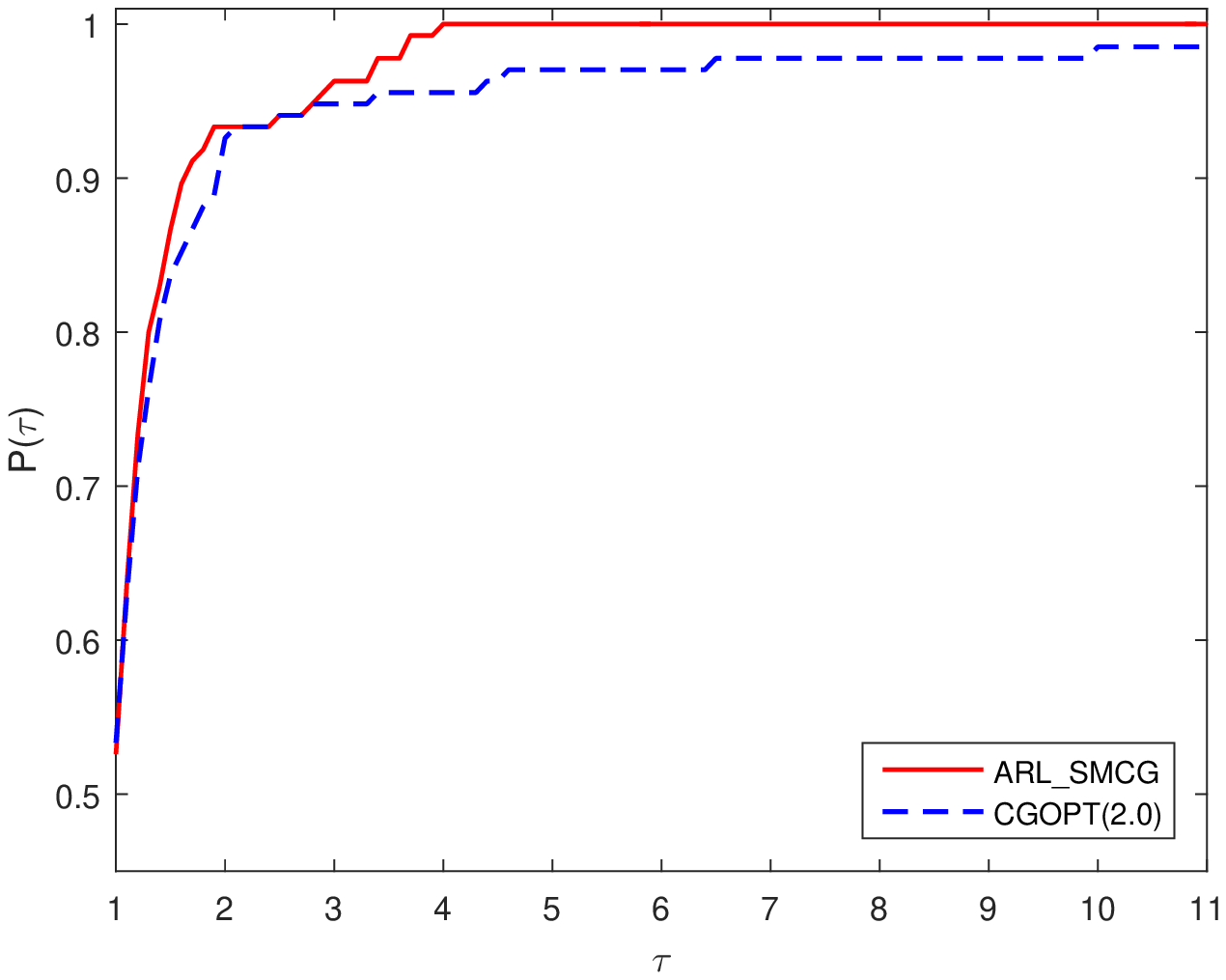}
\caption{ ${N_{g}}$ }\label{fig.11}
	\end{minipage}	
	\begin{minipage}[t]{0.46 \linewidth}
		\centering
		\includegraphics[scale=0.5]{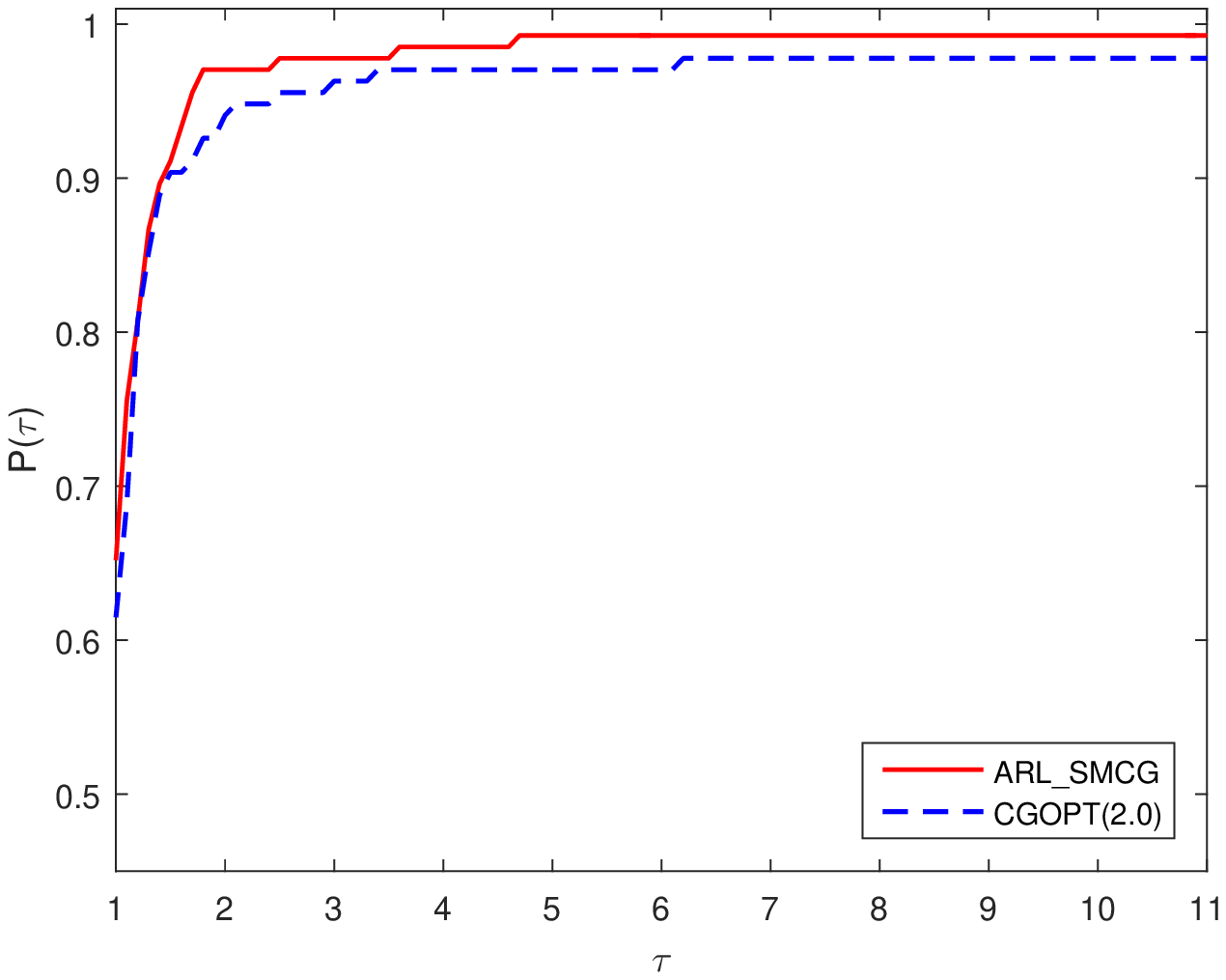}
\caption{ ${T_{cpu}}$ }\label{fig.12}
	\end{minipage}	
\end{figure}

\section{Conclusions}\label{sec5}
In this paper, combined subspace minimization conjugate gradient method with limited memory technique, we presented a regularized limited memory subspace minimization conjugate gradient method, which contains two types of iteration. In the proposed algorithm, a modified regularized quasi-Newton method is given in small dimensional subspace to correct the orthogonality, and an improved initial step size selection strategy and some simple acceleration criteria are designed. Moreover, we establish the global convergence of the proposed algorithm by utilizing generalized nonmonotone  Wolfe line search under some mild assumptions. Some numerical results suggest that our algorithm  yields a tremendous improvement over the ASMCG\_PR and  outperforms  the  most up-to-date  limited memory CG software packages CG\_DESCENT (6.8) and CGOPT(2.0).


\section{Declarations}
\subsection{Ethical Approval}
Not Applicable
\subsection{Availability of supporting data}
Data sharing not applicable to this article as no datasets were generated or analyzed during the current study.
\subsection{Competing interests}
The authors declare no competing interests.
\subsection{Funding}
This research was supported by the National Natural Science Foundation of China (No. 11901561), the Natural Science Foundation of Guizhou (No. ZK[2022]084) and the Natural Science Basic Research Program of Shaanxi (No. 2021JM-396).
\subsection{Authors' contributions}
Wumei Sun wrote the main manuscript text. Hongwei Liu and Zexian Liu reviewed and revised the manuscript.
\subsection{Acknowledgments}
The authors would like to thank the editor and the anonymous referees for their valuable suggestions and comments which have greatly improved the presentation of this paper.

\end{spacing}
\end{document}